\newtheorem{theorem}{Theorem}[section]
\newtheorem{proposition}{Proposition}[theorem]
\newtheorem{lemma}{Lemma}[theorem]
\newcommand{\citep}[1]{\cite{#1}}
\newcommand{\bfc}{\mathbf{c}}
\newcommand{\bfd}{\mathbf{d}}
\newcommand{\bfb}{\mathbf{b}}
\newcommand{\bfp}{\mathbf{p}}
\newcommand{\bfq}{\mathbf{q}}
\newcommand{\bfy}{\mathbf{y}}
\newcommand{\bfx}{\mathbf{x}}
\newcommand{\bfe}{\mathbf{e}}
\newcommand{\bfpi}{\mathbf{\Pi}}
\DeclareMathOperator{\Bez}{Bez}
\DeclareMathOperator{\diag}{diag}
\title{Structured inversion of the Bernstein mass matrix}
\date{}
\author{Larry Allen\thanks{Department of Mathematics, Baylor
    University; One Bear Place \#97328; Waco, TX 76798-7328.
    Email: larry\_alllen@baylor.edu.}
    \and
    Robert C.~Kirby\thanks{Department of Mathematics, Baylor
    University; One Bear Place \#97328; Waco, TX 76798-7328.
    Email: robert\_kirby@baylor.edu.  This work was supported by NSF grant 1525697.}}
\begin{document}
\maketitle

\begin{abstract}
Bernstein polynomials, long a staple of approximation theory and computational geometry, have also increasingly become of interest in finite element methods.  Many fundamental problems in interpolation and approximation give rise to interesting linear algebra questions.  In~\cite{kirby2017fast}, we gave block-structured algorithms for inverting the Bernstein mass matrix on simplicial cells, but did not study fast alorithms for the univariate case.  Here, we give several approaches to inverting the univariate mass matrix based on exact formulae for the inverse; decompositions of the inverse in terms of Hankel, Toeplitz, and diagonal matrices; and a spectral decomposition.  In particular, the eigendecomposition can be explicitly constructed in $\mathcal{O}(n^2)$ operations, while its accuracy for solving linear systems is comparable to that of the Cholesky decomposition.  Moreover, we study conditioning and accuracy of these methods from the standpoint of the effect of roundoff error in the $L^2$ norm on polynomials, showing that the conditioning in this case is far less extreme than in the standard 2-norm.
\end{abstract}

\section{Introduction}
Bernstein polynomials, long used in splines, computer-aided geometric design, and computer graphics were first introduced more than a century ago~\cite{bernstein1912demo}.  More recently, they have also been considered as a tool for high-order approximation of partial differential equations via the finite element method.  Tensorial decomposition of the simplicial Bernstein polynomials under the Duffy transform~\cite{duffy1982quadrature} has led to fast algorithms with comparable complexity to spectral element methods in rectangular geometry~\cite{ainsworth2011bernstein}.  Moreover, Bernstein properties also lead to special recursive blockwise-structure for finite element matrices~\cite{kirby2017fast,kirby2012fast}.  Bernstein polynomials are not only a tool for standard $C^0$ finite element spaces, but also can be used to represent bases for polynomial differential forms discretizing the de Rham complex~\cite{ainsworth2018bernstein, kirby2014low}.

In this paper, we focus on the inversion of the one-dimensional mass matrix.  In~\cite{kirby2017fast}, we gave recursive, block-structured fast algorithms for mass matrices on the $d$-simplex.  These rely on inversion of the one-dimensional mass matrix as an essential building block, but we did not consider fast algorithms or the underlying stability issues for that case.  We turn to these issues here.  In addition to studying the inverse matrix and fast algorithms for applying it, we also give an argument that the effective conditioning for Bernstein polynomials is less extreme than it might appear.  This amounts to introducing a nonstandard matrix norm in which to consider the problem.  In fact, our main result on conditioning mass matrices -- that in the ``right'' norm the condition number is the square root of the standard matrix 2-norm -- is generic and not limited to the particularities of either univariate polynomials or the Bernstein basis.  We believe this sheds important light on the observation that, despite massive condition numbers, Bernstein polynomials of degrees greater than ten can still give highly accurate finite element methods~\cite{AINSWORTH2019766,kirby2014low}.  It does also, however, suggest an eventual limit to the degree of polynomial absent algorithmic advances.

The mass (or Gram) matrix arises abstractly in the case of finding the best approximation onto a subspace of a generic Hilbert space.
Given a Hilbert space $H$ with inner product $(\cdot, \cdot)$ and a finite-dimensional subspace $V^N$ with basis $\{\phi_i\}_{i=0}^N$, the problem of optimally approximating $u \in H$ by $\sum_{i=0}^N c_i \phi_i$ with respect to the norm of $H$ requires solving the linear system
\begin{equation}
\label{eq:theeq}
M \bfc = \bfb,
\end{equation}
where $M_{ij} = (\phi_i, \phi_j)$ and $\bfb_i = (u, \phi_i)$.

While choosing an orthogonal basis trivializes the inversion of $M$, matrices of this type also arise in other settings that constrain the choice of basis.  For example, finite element methods typically require ``geometrically decomposed'' bases~\cite{arnold2009geometric} that enable enforcement of continuity across mesh elements.  Even when discontinuous bases are admissible for finite element methods, geometric decomposition can be used to simplify boundary operators~\cite{kirby2017fast}.  Moreover, splines and many geometric applications make use of the Bernstein-Bezier basis as local representations.

\section{Notation and Mass matrix}
For integers $n \geq 0$ and $0 \leq i \leq n$, we define the Bernstein polynomial $B^n_i(x)$ as
\begin{equation}
B^n_i(x) = \binom{n}{i} x^i (1-x)^{n-i}.
\end{equation}
Each $B^n_i(x)$ is a polynomial of degree $n$, and the set
$\left\{ B^n_i(x) \right\}_{i=0}^n$ forms a basis for the vector space of polynomials of degree $n$.

If $m\leq n$, then any polynomial expressed in the basis $\left\{B^m_i(x)\right\}_{i=0}^m$ can also be expressed in the basis $\left\{B^n_i(x)\right\}_{i=0}^n$. We denote by $E^{m,n}$ the $(n+1)\times (m+1)$ matrix that maps the coefficients of the degree $m$ representation to the coefficients of the degree $n$ representation. It is remarked in~\cite{farouki2000legendre} that the entries of $E^{m,n}$ are given by
\begin{equation}
\label{elevation}
E^{m,n}_{ij} = \frac{\binom{m}{j}\binom{n-m}{i-j}}{\binom{n}{i}},
\end{equation}
with the standard convention that $\binom{n}{i} = 0$ for $i < 0$ or $i > n$.

We will also need reference to the Legendre polynomials, mapped from their typical home on $[-1, 1]$ to $[0, 1]$.  We let $L^n(x)$ denote the Legendre polynomial of degree $n$ over $[0, 1]$, scaled so that $L^n(1)=1$ and
\begin{equation}
\label{legendreL2norm}
\left\| L^n\right\|^2_{L^2} = \frac{1}{2n+1}.
\end{equation}
It was observed in~\cite{farouki2000legendre} that $L^n(x)$ has the following representation:
\begin{equation}
\label{legendrebernstein}
L^n(x) = \sum_{i=0}^n (-1)^{n+i} \binom{n}{i} B^n_i(x).
\end{equation}

The Bernstein mass matrix $M^n$ is given by
\begin{equation}
M^n_{ij} = \int_0^1 B^n_i(x) B^n_j(x) dx,
\end{equation}
which can be exactly computed~\cite{kirby2011fast} as
\begin{equation}
M^n_{ij} = \binom{n}{i} \binom{n}{j} \frac{(2n-i-j)!(i+j)!}{(2n+1)!}.
\end{equation}

The mass matrix, whether with Bernstein or any other polynomials, plays an important role in connecting the $L^2$ topology on the finite-dimensional space to linear algebra.  To see this, we first define mappings connecting polynomials of degree $n$ to $\mathbb{R}^{n+1}$.  Given any $\bfc \in \mathbb{R}^{n+1}$, we let $\pi(\bfc)$ be the polynomial expressed in the Bernstein basis with coefficients contained in $\bfc$:
\begin{equation}
\pi(\bfc)(x) = \sum_{i=0}^n \bfc_i B^n_i(x).
\end{equation}
We let $\bfpi$ be the inverse of this mapping, sending any polynomial of degree $n$ to the vector of $n+1$ coefficients with respect to the Bernstein basis.

Now, let $p(x)$ and $q(x)$ be polynomials of degree $n$ with expansion coefficients $\bfpi(p) = \bfp$ and $\bfpi(q) = \bfq$.  Then the $L^2$ inner product of $p$ and $q$ is given by the $M^n$-weighted inner product of $\bfp$ and $\bfq$, for 
\begin{equation}
\label{eq:innprod}
\int_0^1 p(x) q(x) dx
=
\sum_{i,j=0}^n \bfp_i \bfq_j \int_0^1 B^n_i(x) B^n_j(x) dx
= \bfp^T M^n \bfq.
\end{equation}
Similarly, if
\begin{equation}
\| \bfp \|_{M^n} = \sqrt{ \bfp^T M^n \bfp }
\end{equation}
is the $M^n$-weighted vector norm, then we have for $p = \pi(\bfp)$,
\begin{equation}
\| p \|_{L^2} = \| \bfp \|_{M^n}.
\end{equation}

It was shown in~\cite{kirby2011fast} that if $m\leq n$, then
\begin{equation}
\label{mass_elev}
M^m = \left(E^{m,n}\right)^T M^n E^{m,n}.
\end{equation}

Further, in~\cite{kirby2011fast} it was observed that the mass matrix is, up to a row and column scaling, a Hankel matrix (constant along antidiagonals).  Let $\Delta^n$ be the $(n+1)\times(n+1)$ diagonal matrix with binomial coefficients on the diagonal:
\begin{equation}
\Delta^n_{ij}
= \begin{cases}
\binom{n}{i}, & i = j; \\
0, & \text{otherwise}.
\end{cases}
\end{equation}
Then, let $\widetilde{M}^n$ be defined by
\begin{equation}
\widetilde{M}^n_{ij} = \frac{(2n-i-j)!(i+j)!}{(2n+1)!},
\end{equation}
so that
\begin{equation}
\label{eq:factorm}
M^n = \Delta^n \widetilde{M}^n \Delta^n.
\end{equation}
Since  $\widetilde{M}^n$ depends only on the sum $i+j$ and not $i$ and $j$ separately, it is a Hankel matrix.  Therefore, $\widetilde{M}^n$ and hence $M^n$ can be applied to a vector in $\mathcal{O}(n \log n)$ operations via a circulant embedding and fast Fourier transforms~\cite{luk2000fast}, although the actual point where this algorithm wins over the standard one may be at a rather high degree.

In~\cite{kirby2017fast}, we gave a spectral characterization of the mass matrix on simplices of any dimension.  For the one-dimensional case, the eigenvalues and eigenvectors are:
\begin{theorem}
\label{thm:masseig}
The eigenvalues of $M^n$ are $\{\lambda_i^n\}_{i=0}^n$, where
\begin{equation}
\lambda^n_i = \frac{(n!)^2}{(n+i+1)!(n-i)!},
\end{equation}
and the eigenvector of $M^n$ corresponding to $\lambda_i^n$ is $E^{i,n}\bfpi(L^i)$.
\end{theorem}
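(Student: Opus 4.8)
The plan is to verify directly that $M^n\mathbf{v}_i=\lambda_i^n\mathbf{v}_i$ for $\mathbf{v}_i:=E^{i,n}\bfpi(L^i)$, and then to check that the exhibited $n+1$ eigenpairs form a complete set. First I would identify $\mathbf{v}_i$ concretely: since degree elevation alters the representing coefficients but not the underlying polynomial, $\pi(\mathbf{v}_i)=\pi(E^{i,n}\bfpi(L^i))=L^i$, so $\mathbf{v}_i$ is precisely the vector of degree-$n$ Bernstein coefficients of $L^i$. Next, for any $\bfc\in\mathbb{R}^{n+1}$ with $\pi(\bfc)=p$, the definition of the mass matrix gives $(M^n\bfc)_j=\sum_k\left(\int_0^1 B^n_j B^n_k\,dx\right)\bfc_k=\int_0^1 B^n_j(x)\,p(x)\,dx$. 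Taking $p=L^i$ yields the key formula $(M^n\mathbf{v}_i)_j=\int_0^1 B^n_j(x)L^i(x)\,dx$, so the theorem reduces to the scalar identity $\int_0^1 B^n_j(x)L^i(x)\,dx=\lambda_i^n\,(\mathbf{v}_i)_j$ for all $0\le i,j\le n$.

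To evaluate the left-hand side using only results already stated, I would expand $L^i$ by \eqref{legendrebernstein} and integrate the resulting mixed-degree Bernstein products termwise. Since $\int_0^1 B^n_j B^i_m\,dx=\binom{n}{j}\binom{i}{m}\frac{(j+m)!(n+i-j-m)!}{(n+i+1)!}$ via the Beta integral $\int_0^1 x^a(1-x)^b\,dx=\frac{a!\,b!}{(a+b+1)!}$, this gives $\int_0^1 B^n_j L^i\,dx=\frac{\binom{n}{j}}{(n+i+1)!}\sum_m(-1)^{i+m}\binom{i}{m}^2(j+m)!\,(n+i-j-m)!$. In parallel, from the degree-$i$ coefficients $(-1)^{i+m}\binom{i}{m}$ of $L^i$ together with the elevation formula \eqref{elevation}, one has $(\mathbf{v}_i)_j=\binom{n}{j}^{-1}\sum_m(-1)^{i+m}\binom{i}{m}^2\binom{n-i}{j-m}$.

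The main obstacle is verifying that these two closed forms agree up to the factor $\lambda_i^n=(n!)^2/[(n+i+1)!(n-i)!]$; after clearing common prefactors this is a terminating single-sum binomial identity in the variable $j$, which I expect to follow from the Pfaff--Saalschütz theorem (or, lacking a clean classical reduction, from Zeilberger's algorithm). As a guide and a partial check, the case $n=i$ is transparent: only the top-degree Legendre component of $B^i_j$ contributes, giving $\int_0^1 B^i_j L^i\,dx=\frac{(-1)^{i-j}\binom{i}{j}}{(2i+1)\binom{2i}{i}}$ and hence the ratio $\lambda_i^i=1/[(2i+1)\binom{2i}{i}]=(i!)^2/(2i+1)!$, consistent with the claimed eigenvalue.

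Finally, I would record the completeness argument. Orthogonality of the Legendre polynomials gives $\mathbf{v}_i^T M^n\mathbf{v}_j=\int_0^1 L^i L^j\,dx=\delta_{ij}/(2i+1)$, so the eigenvectors are $M^n$-orthogonal (a useful consistency check, though not by itself sufficient to force the eigenvector property, which is why the explicit computation above is needed). The eigenvalues $\lambda_i^n$ are strictly decreasing in $i$, hence pairwise distinct, and since $\{L^i\}_{i=0}^n$ is a basis of the degree-$n$ polynomials the vectors $\{\mathbf{v}_i\}_{i=0}^n$ are linearly independent. Therefore the $n+1$ pairs $(\lambda_i^n,\mathbf{v}_i)$ constitute the full eigendecomposition of $M^n$.
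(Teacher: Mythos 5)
Your proposal is correct in outline, but note a point of comparison first: the paper does not prove Theorem~\ref{thm:masseig} at all --- it quotes the result from~\cite{kirby2017fast}, where the spectral characterization is established for simplicial mass matrices in every dimension. Your direct univariate verification is therefore a genuinely different, self-contained route, and its structure is sound. The reduction $(M^n\bfc)_j=\int_0^1 B^n_j(x)\,\pi(\bfc)(x)\,dx$, hence $(M^n\mathbf{v}_i)_j=\int_0^1 B^n_j(x)L^i(x)\,dx$, is exactly right; both of your closed forms check out against \eqref{elevation} and \eqref{legendrebernstein}; your $n=i$ spot check is accurate; and the completeness argument is in order --- the $\mathbf{v}_i$ are independent because $\pi$ is an isomorphism carrying them to the basis $\{L^i\}$, and $\lambda^n_{i+1}/\lambda^n_i=(n-i)/(n+i+2)<1$ gives distinctness.

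The one soft spot is the tool you name for the remaining identity. In hypergeometric form your integral side is a multiple of ${}_3F_2(-i,-i,j+1;\,1,\,j-n-i;\,1)$ and your coefficient side a multiple of ${}_3F_2(-i,-i,-j;\,1,\,n-i-j+1;\,1)$; neither series is balanced (the Saalsch\"utz condition would force $i=n+1$ and $i=-(n+1)$, respectively), so Pfaff--Saalsch\"utz does not apply directly. Your fallback is legitimate --- both sides are definite sums of proper hypergeometric terms, so Zeilberger's algorithm produces recurrences in $j$ and the identity is certifiable, making this a deferral rather than a fatal gap --- but the ``clean classical reduction'' would be a Thomae--Whipple two-term transformation of terminating ${}_3F_2(1)$ series rather than Saalsch\"utz. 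Alternatively, you can bypass the identity entirely: your key formula says precisely that $L^i$ is an eigenfunction of the Bernstein--Durrmeyer operator $D_n p=(n+1)\sum_j\left(\int_0^1 B^n_j p\,dx\right)B^n_j$ with eigenvalue $(n+1)\lambda^n_i$. Since $D_n$ is self-adjoint on $L^2(0,1)$, and since $\int_0^1 B^n_j x^r\,dx=\tfrac{n!}{(n+r+1)!}\tfrac{(j+r)!}{j!}$ is a polynomial in $j$ of degree $r$ while $\sum_j\binom{j}{s}B^n_j(x)=\binom{n}{s}x^s$, the operator $D_n$ maps polynomials of degree at most $k$ to polynomials of degree at most $k$; a self-adjoint operator preserving that flag is diagonalized by the Legendre basis, and comparing leading coefficients yields $\lambda^n_i$ --- Derriennic's classical argument, with no binomial identity needed.
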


\section{Characterizing the inverse matrix}
In this section, we present several approaches to constructing and applying the inverse of the mass matrix.

\subsection{A formula for the inverse}
The following result (actually, a generalization) is derived in~\cite{lewanowicz2011bezier}:\begin{theorem}
\label{thm:inverseformula}
\begin{equation}
\left(M^n\right)^{-1}_{ij} = \frac{(-1)^{i+j}}{\binom{n}{i}\binom{n}{j}}\sum_{k=0}^n (2k+1-i+j)\binom{n+1}{i-k}^2\binom{n+1}{j+k+1}^2.
\end{equation}
\end{theorem}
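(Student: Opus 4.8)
The plan is to build the inverse directly from the spectral data in Theorem~\ref{thm:masseig} and only afterwards fight for the closed form. Write $v_k = E^{k,n}\bfpi(L^k)$ for the eigenvector attached to $\lambda^n_k$; by construction $v_k$ is the coefficient vector of $L^k$ in the degree-$n$ Bernstein basis. First I would use the $L^2$--$M^n$ correspondence~\eqref{eq:innprod}, the normalization~\eqref{legendreL2norm}, and orthogonality of the Legendre polynomials to get
\begin{equation}
v_k^T M^n v_\ell = \int_0^1 L^k(x) L^\ell(x)\,dx = \frac{\delta_{k\ell}}{2k+1}.
\end{equation}
Since the $v_k$ are eigenvectors, $v_k^T M^n v_\ell = \lambda^n_\ell\, v_k^T v_\ell$; comparing with the line above forces $v_k^T v_\ell = 0$ for $k\neq\ell$ and $\|v_k\|_2^2 = 1/\big((2k+1)\lambda^n_k\big)$. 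Feeding the orthonormalized eigenvectors into the spectral expansion of the inverse then cancels the eigenvalues completely:
\begin{equation}
\label{eq:specinv}
\left(M^n\right)^{-1} = \sum_{k=0}^n \frac{1}{\lambda^n_k}\,\frac{v_k v_k^T}{\|v_k\|_2^2} = \sum_{k=0}^n (2k+1)\, v_k v_k^T .
\end{equation}

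Next I would make the entries of $v_k$ explicit. Combining the Bernstein representation~\eqref{legendrebernstein} of $L^k$, which gives $\left(\bfpi(L^k)\right)_a = (-1)^{k+a}\binom{k}{a}$, with the degree-elevation entries~\eqref{elevation} yields
\begin{equation}
\left(v_k\right)_i = \frac{1}{\binom{n}{i}} \sum_{a} (-1)^{k+a} \binom{k}{a}^2 \binom{n-k}{i-a}.
\end{equation}
Substituting this into~\eqref{eq:specinv} presents $\left(M^n\right)^{-1}_{ij}$ as $\big(\binom{n}{i}\binom{n}{j}\big)^{-1}$ times a triple sum over $k$ and the two inner indices, with the interior signs $(-1)^{k+a}(-1)^{k+b}=(-1)^{a+b}$ reorganizing into the overall $(-1)^{i+j}$ of the statement.

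The remaining, and genuinely hard, step is to collapse that triple sum into the single sum claimed, i.e.\ to prove
\begin{equation}
\begin{split}
\sum_{k=0}^n (2k+1)
&\Big(\sum_a (-1)^{k+a}\tbinom{k}{a}^2\tbinom{n-k}{i-a}\Big)
 \Big(\sum_b (-1)^{k+b}\tbinom{k}{b}^2\tbinom{n-k}{j-b}\Big) \\
&= (-1)^{i+j}\sum_{k} (2k+1-i+j)\tbinom{n+1}{i-k}^2\tbinom{n+1}{j+k+1}^2 .
\end{split}
\end{equation}
The inner sums are, up to normalization, Hahn polynomials with unit weights (the discrete analogue of the Legendre family), so~\eqref{eq:specinv} is a completeness sum for these polynomials. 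I would replace each inner sum by its single-${}_3F_2$ (Hahn) closed form and then reduce the product, together with the weighted sum over $k$, by a Christoffel--Darboux-type manipulation followed by a Pfaff--Saalschütz summation to dispose of the surviving balanced hypergeometric. I expect this hypergeometric bookkeeping to be the main obstacle: the two free indices $i,j$ keep the sums unbalanced until the orthogonal-polynomial structure is exploited, and the surviving $+1$'s in $\binom{n+1}{i-k}$ and $\binom{n+1}{j+k+1}$ must be produced by the contiguity shifts, not read off directly.

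As an independent check -- and a viable alternative that avoids the eigenvectors entirely -- one can verify the statement by forming $\sum_j M^n_{ij}\left(M^n\right)^{-1}_{jq}$ from the explicit entries of $M^n$ and the proposed inverse and reducing it to $\delta_{iq}$ by Chu--Vandermonde; this is more mechanical but bottlenecks on the same Saalschützian sum. The cases $n=0,1$ already reproduce the formula and serve to pin down the index conventions before attempting the general reduction.
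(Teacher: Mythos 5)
Your opening is correct, and it is worth noting that it reproduces ingredients the paper already has: the computation $v_k^T M^n v_\ell = \delta_{k\ell}/(2k+1)$ together with the eigenvector property gives exactly the normalization $\|v_k\|_2^2 = 1/\bigl((2k+1)\lambda^n_k\bigr)$ of Proposition~\ref{prop:legendre2norm}, and the resulting expansion $(M^n)^{-1} = \sum_{k}(2k+1)\,v_k v_k^T$ is precisely the Legendre expansion of the dual basis underlying Subsection~\ref{ssec:inversedual}, since $\bfd^{n,j}_i = \sum_k (2k+1)(v_k)_i(v_k)_j$. But past that point your argument is a program, not a proof. The entire content of Theorem~\ref{thm:inverseformula} is the collapse of your triple sum into the single sum with the asymmetric factor $(2k+1-i+j)$ and the shifted binomials $\binom{n+1}{i-k}$, $\binom{n+1}{j+k+1}$, and you explicitly leave that displayed identity unproven, offering only a strategy (Hahn closed forms, a Christoffel--Darboux-type manipulation, Pfaff--Saalsch\"utz) while conceding it is the main obstacle. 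Even your intermediate sign claim is not yet an argument: $(-1)^{a+b}$ alternates \emph{inside} the inner sums, so nothing ``reorganizes into $(-1)^{i+j}$'' until each inner sum has been evaluated in closed form as $(-1)^{k+i}$ times a nonnegative quantity over $\binom{n}{i}$ --- which is itself the Saalsch\"utzian evaluation you defer. Checking $n=0,1$ pins conventions but is not evidence for the general reduction.

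The paper's complete derivation sidesteps the hypergeometric bookkeeping entirely, and the contrast shows where your plan is inefficient. Writing $M^n = \Delta^n \widetilde{M}^n \Delta^n$ with $\widetilde{M}^n$ Hankel, the paper invokes the Heinig--Rost formula (Theorem~\ref{heinigrost}), $H^{-1} = \frac{1}{v_{n+1}}\Bez(u,v)$, so the factor $(2k+1-i+j)$ emerges from the Bézout difference $u^n_{j+k+1}v^{n+1}_{i-k} - u^n_{i-k}v^{n+1}_{j+k+1}$ by two lines of binomial algebra --- this \emph{is} the Christoffel--Darboux-type difference structure you hope to conjure, obtained for free once the generators are known. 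The actual work then reduces to a one-index problem: proving $M^n\bfy^n = \bfe^n$ for $\bfy^n_i = (-1)^{n+i}(n+1)\binom{n+1}{i}$, done by induction via the recurrence $\bfy^n = (2n+1)\bfpi(L^n) + E^{n-1,n}\bfy^{n-1}$ and the fact that $\bfpi(L^n)$ spans the null space of $(E^{n-1,n})^T$ (Lemmas~\ref{lem:null}--\ref{lem:recurrence}, Proposition~\ref{prop:lastcol}), plus the choice of a Hankel extension (Lemma~\ref{lem:extension}) giving $v^{n+1}$ by reversal. So the honest status of your proposal is: you have proven the (already available) spectral representation and \emph{reduced} the theorem to a nontrivial, uncited summation identity. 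To close the gap you must either carry out the ${}_3F_2$/Saalsch\"utz reduction in full --- nontrivial, since the two free indices $i,j$ must be disentangled exactly as you anticipate --- or, more economically, pivot to the paper's route: guess and verify the single column $\bfy^n$, and let the Bézoutian machinery generate the two-index formula.
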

This result is obtained by characterizing (possibly constrained) Bernstein dual polynomials via Hahn orthogonal polynomials.  Much earlier, Farouki~\cite{farouki2000legendre} gave a characterization of the standard dual polynomials by more direct means.  Lu~\cite{lu2015gram} builds on the work in~\cite{lewanowicz2011bezier} to give a recursive formula for applying this inverse.

In this section, we summarize a proof of Theorem~\ref{thm:inverseformula} based on Farouki's representation of the dual basis in Subsection~\ref{ssec:inversedual}, and then give an alternative derivation of the result based on B\'ezoutians in Subsection~\ref{ssec:bezout}.  Since the mass matrix is diagonal in the Legendre basis, we can also characterize the inverse operator by converting from Bernstein to Legendre representation, multiplying by a diagonal, and then converting back.  This approach, equivalent to the spectral decomposition, is given in Subsection~\ref{ssec:convert}.

\subsubsection{Inversion via the dual basis}
\label{ssec:inversedual}

The dual basis to $\left\{ B^n_i(x)\right\}_{i=0}^n$ is the set of polynomials $\left\{ d^n_i(x)\right\}_{i=0}^n$ satisfying
\begin{equation}
\label{dual}
\int_0^1 B^n_i(x)d^n_j(x) dx =
\begin{cases}
1, & i=j; \\
0, & \text{otherwise}.
\end{cases}
\end{equation}
If we consider $\bfd^{n,j}=\bfpi(d^n_j)$, then
\begin{equation}
\int_0^1 B^n_i(x)d^n_j(x) dx = \sum_{k=0}^n \bfd^{n,j}_k \int_0^1 B^n_i(x)B^n_k(x) dx = \left(M^n\bfd^{n,j}\right)_i,
\end{equation}
and hence (\ref{dual}) implies that $\left( M^n\right)^{-1}_{ij} = \bfd^{n,j}_i$. Following Farouki's representation of the dual basis coefficients~\cite{farouki2000legendre}, this gives us that
\begin{equation}
\left( M^n\right)^{-1}_{ij} = \frac{(-1)^{i+j}}{\binom{n}{i}\binom{n}{j}} \sum_{k=0}^n (2k+1)\binom{n+k+1}{n-j}\binom{n-k}{n-j}\binom{n+k+1}{n-i}\binom{n-k}{n-i}.
\end{equation}
Applying a few binomial identities gives the representation in Theorem~\ref{thm:inverseformula}.

\subsubsection{Inversion via Bezoutians}
\label{ssec:bezout}

In this section, we detail an alternate derivation of Theorem~\ref{thm:inverseformula} via B\'ezout matrices, which we will use in Section~\ref{ssec:factor} to decompose the inverse in terms of diagonal, Hankel, and Toeplitz matrices.

If $u(t)=\sum_{i=0}^{n+1} u_i t^i$ and $v(t)=\sum_{i=0}^{n+1} v_i t^i$ are polynomials of degree less than or equal to $(n+1)$ expressed in the monomial basis, then the B\'ezout matrix generated by $u$ and $v$, denoted $\Bez(u,v)$, is the $(n+1)\times (n+1)$ matrix with entries $b_{ij}$ satisfying
\begin{equation}
\frac{u(s)v(t)-u(t)v(s)}{s-t}=\sum_{i,j=0}^n b_{ij}s^i t^j.
\end{equation}
By comparing coefficients, we have that a closed form expression for the entries is given by
\begin{equation}
\label{bezout_entries}
b_{ij} = \sum_{k=0}^{\min\left\{i,n-j\right\}}\left( u_{j+k+1}v_{i-k}-u_{i-k}v_{j+k+1}\right).
\end{equation}

Heinig and Rost~\cite{heinig1984algebraic} gave a formula for the inverse of a Hankel matrix in terms of a B\'ezout matrix.
\begin{theorem}
\label{heinigrost}
If $H$ is an $(n+1)\times (n+1)$ nonsingular Hankel matrix, and $\widehat{H}$ is any $(n+2)\times (n+2)$ nonsingular Hankel extension of $H$ obtained by appending a row and column to $H$, then
\begin{equation}
\label{eq:heinigrost}
H^{-1} = \frac{1}{v_{n+1}}\Bez(u,v),
\end{equation}
where $u$ is the polynomial whose coefficients are given by the last column of $H^{-1}$, and $v$ is the polynomial whose coefficients are given by the last column of $\widehat{H}^{-1}$.
\end{theorem}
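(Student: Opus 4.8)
The plan is to prove the equivalent statement that $H\,\Bez(u,v) = v_{n+1} I$; since $v_{n+1} = (\widehat{H}^{-1})_{n+1,n+1} = \det(H)/\det(\widehat{H}) \neq 0$ (the relevant cofactor of $\widehat{H}$ is exactly $\det H$, and both matrices are nonsingular), this identity together with the invertibility of $H$ yields $H^{-1} = \tfrac{1}{v_{n+1}}\Bez(u,v)$. Writing $H_{ij} = h_{i+j}$ and letting $\widehat{h}_{2n+1},\widehat{h}_{2n+2}$ denote the two entries appended in forming $\widehat{H}$, I would first record the two \emph{fundamental systems} encoded by the hypotheses. Because $u$ is the last column of $H^{-1}$ we have $Hu = \bfe_n$, i.e. $\sum_{j=0}^{n} h_{i+j}\,u_j = \delta_{in}$ for $0\le i\le n$; because $v$ is the last column of $\widehat{H}^{-1}$ we have $\widehat{H}v = \bfe_{n+1}$, whose first $n+1$ rows, after splitting off the $j=n+1$ term, read $\sum_{j=0}^{n} h_{i+j}\,v_j = -\,\widehat{h}_{i+n+1}\,v_{n+1}$ for $0\le i\le n$. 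These are the only facts about $u$ and $v$ that the argument will use.

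Next I would substitute the closed form (\ref{bezout_entries}) for the B\'ezout entries into the product and compute, for $0 \le i,k \le n$,
\[
\bigl(H\,\Bez(u,v)\bigr)_{ik} = \sum_{j=0}^{n} h_{i+j}\sum_{l=0}^{\min\{j,\,n-k\}}\bigl(u_{k+l+1}v_{j-l} - u_{j-l}v_{k+l+1}\bigr).
\]
Interchanging the order of summation and reindexing the inner sum by $m = j-l$ turns each half into a truncated Hankel action,
\[
\bigl(H\,\Bez(u,v)\bigr)_{ik} = \sum_{l=0}^{n-k}\Bigl(u_{k+l+1}\sum_{m=0}^{n-l} h_{(i+l)+m}v_m \;-\; v_{k+l+1}\sum_{m=0}^{n-l} h_{(i+l)+m}u_m\Bigr).
\]
The aim is then to collapse the inner sums using the two fundamental systems: the $u$-sum should reduce to $\delta_{i+l,\,n}$ and the $v$-sum to $-\,\widehat{h}_{i+l+n+1}\,v_{n+1}$, after which the outer sum over $l$ telescopes and leaves exactly $v_{n+1}\delta_{ik}$.

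The main obstacle is the boundary bookkeeping that stands between the displayed expression and this collapse. The inner sums run only to $m = n-l$, whereas the fundamental systems supply $\sum_{m=0}^{n} h_{(i+l)+m}(\cdot)$; the discrepancy is a tail $\sum_{m=n-l+1}^{n}$ that must be carried along and shown to cancel. Moreover the row index $i+l$ can exceed $n$, so that the Hankel action reaches entries $h_r$ with $r>2n$ that are \emph{not} determined by $H$ alone -- these are precisely the appended entries $\widehat{h}_{2n+1},\widehat{h}_{2n+2}$, and it is their interaction with the tail terms and with the convention $u_{n+1}=0$ (valid since $u$ has degree at most $n$) that produces the correct cancellations and isolates the single surviving factor $v_{n+1}$. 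Verifying that every such boundary term either cancels in pairs or assembles into $v_{n+1}\delta_{ik}$ is the delicate, computational heart of the argument; the algebra of the two fundamental systems must guarantee that $\widehat{h}_{2n+1},\widehat{h}_{2n+2}$ drop out of the final answer, as they are forced to, since $H^{-1}$ cannot depend on the arbitrary choice of extension.

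Conceptually, this computation is the finite, basis-level shadow of the Christoffel--Darboux formula: interpreting $H$ as the Gram matrix of the monomials under the (formal) moment functional $L(t^m)=h_m$, the vectors $u$ and $v$ are the coefficient vectors of the degree-$n$ and degree-$(n+1)$ orthogonal polynomials for $L$ and its extension, and $\Bez(u,v)$ is exactly the coefficient matrix of the reproducing (Christoffel--Darboux) kernel, whose Gram-matrix representation is $H^{-1}$ up to the normalizing scalar $v_{n+1}$. One could instead organize the entire proof around this observation, although doing so rigorously for a merely nonsingular (rather than positive-definite) Hankel matrix requires treating the orthogonal polynomials as formal objects justified by the nonvanishing of the minors $\det H$ and $\det\widehat{H}$.
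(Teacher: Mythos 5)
The paper does not prove Theorem~\ref{heinigrost} at all: it is imported as a known result from Heinig and Rost~\cite{heinig1984algebraic}, so there is no internal proof to compare against and your attempt must stand on its own. As a plan it has the right shape: reducing to $H\Bez(u,v)=v_{n+1}I$, observing via Cramer's rule that $v_{n+1}=\det H/\det\widehat{H}\neq 0$ (a point the paper leaves implicit but which is needed for (\ref{eq:heinigrost}) to make sense at all), extracting the two fundamental systems, and interchanging the double sum are all correct. But the proof stops exactly where the content lies. As displayed, your double sum involves only entries $h_r$ with $r\le 2n$, so no undefined entries appear there; they appear only when you try to complete the truncated sums $\sum_{m=0}^{n-l}$ to the full sums $\sum_{m=0}^{n}$ demanded by the fundamental systems, at which point rows $r=i+l>n$ require $h_{r+m}$ with $r+m$ as large as $3n-k$, beyond even $\widehat{h}_{2n+2}$. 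One must therefore first build an infinite Hankel extension (possible because $v_{n+1}\neq 0$ lets the relation $\sum_{m=0}^{n}h_{r+m}v_m=-h_{r+n+1}v_{n+1}$ \emph{define} $h_{r+n+1}$ recursively for all $r>n$) and then control the defect sums $\sum_{m=0}^{n}h_{r+m}u_m$ for $r>n$, which are no longer $\delta_{rn}$; none of this is carried out, and the description ``the outer sum telescopes'' is not a proof. Worse, the one tool you invoke to dispose of the extension entries --- that they ``are forced to drop out, since $H^{-1}$ cannot depend on the arbitrary choice of extension'' --- is circular: a priori $\Bez(u,v)/v_{n+1}$ manifestly depends on $\widehat{H}$ through $v$, and independence of the extension is a corollary of the theorem, not an admissible input.

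There is also a sign obstruction you would have met instantly by testing the plan at $n=0$, which is worth doing before investing in the bookkeeping. With the paper's convention (\ref{bezout_entries}), $H=(h_0)$ gives $u_0=1/h_0$ and $v_1=h_0/\det\widehat{H}$, while $\Bez(u,v)=(u_1v_0-u_0v_1)=(-u_0v_1)$ since $u_1=0$; hence $\tfrac{1}{v_1}\Bez(u,v)=-1/h_0=-H^{-1}$. So the identity you set out to verify is actually $H\Bez(u,v)=-v_{n+1}I$, equivalently $H^{-1}=\tfrac{1}{v_{n+1}}\Bez(v,u)$; this traces to a convention mismatch in the paper's transcription of Heinig--Rost (the same $n=0$ test shows the two displayed lines of Theorem~\ref{thm:hankelinverse} differ by an overall sign: the first evaluates to $-1$, the second to $+1=(\widetilde{M}^0)^{-1}$). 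Any direct verification along your lines must settle this sign first, since otherwise the deferred cancellation can never produce $v_{n+1}\delta_{ik}$. Your closing Christoffel--Darboux interpretation is a legitimate known alternative route, correctly flagged as needing formal orthogonal polynomials for merely nonsingular $H$, but it too is only a sketch --- so on either route the computational heart of the theorem remains unproven.
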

Since the matrices $H$ and $\widehat{H}$ are of different sizes, the corresponding polynomials have different degrees. We reconcile this difference by elevating by one degree in the monomial basis (that is, appending a zero to the vector of coefficients).

The next few results are dedicated to finding the $u^n$ and $v^{n+1}$ that correspond to $\widetilde{M}^n$. We begin by showing that the null space of $\left(E^{n-1,n}\right)^T$ is spanned by the eigenvector corresponding to $\lambda^n_n$. We then use this to project our candidate $\bfy^n$ for the last column of $\left(M^n\right)^{-1}$ onto the null space and its orthogonal complement. This decomposition gives a recurrence relation for the $\bfy^n$, which will be the foundation for an inductive proof that $\bfy^n$ is the last column of $\left(M^n\right)^{-1}$. The coefficients of $u^n$ are then obtained via~\eqref{eq:factorm}.

\begin{lemma}
\label{lem:null}
The null space of $\left(E^{n-1,n}\right)^T$ is spanned by $\bfpi(L^n)$.
\end{lemma}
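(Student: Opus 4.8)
The plan is to first reduce the statement to exhibiting a single nonzero kernel vector, and then to identify that vector with $\bfpi(L^n)$ using the $L^2$-orthogonality of the Legendre polynomials together with Theorem~\ref{thm:masseig}. For the reduction, I would note that degree elevation is injective: if $E^{n-1,n}\mathbf{r}=0$ then the degree-$n$ representation of the underlying degree-$(n-1)$ polynomial vanishes, forcing $\mathbf{r}=0$. Hence $E^{n-1,n}$, an $(n+1)\times n$ matrix, has full column rank $n$, so $\left(E^{n-1,n}\right)^T$ has rank $n$ and its null space in $\mathbb{R}^{n+1}$ is one-dimensional. Since the entries of $\bfpi(L^n)$ are $(-1)^{n+i}\binom{n}{i}$ by~\eqref{legendrebernstein}, this vector is nonzero, so it suffices to show $\left(E^{n-1,n}\right)^T\bfpi(L^n)=0$.

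For this I would argue as follows. The Legendre polynomial $L^n$ is $L^2$-orthogonal to every polynomial of degree at most $n-1$. Any such polynomial, with degree-$(n-1)$ Bernstein coefficients $\mathbf{r}\in\mathbb{R}^n$, has degree-$n$ coefficients $E^{n-1,n}\mathbf{r}$, so by~\eqref{eq:innprod} its inner product against $L^n$ equals $\bfpi(L^n)^T M^n E^{n-1,n}\mathbf{r}$. As this vanishes for every $\mathbf{r}$, I obtain $\left(E^{n-1,n}\right)^T M^n \bfpi(L^n)=0$, using the symmetry of $M^n$. Now Theorem~\ref{thm:masseig} (with $i=n$, and $E^{n,n}=I$) says precisely that $\bfpi(L^n)$ is the eigenvector of $M^n$ for the eigenvalue $\lambda^n_n$, that is, $M^n\bfpi(L^n)=\lambda^n_n\bfpi(L^n)$. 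Substituting gives $\lambda^n_n\left(E^{n-1,n}\right)^T\bfpi(L^n)=0$, and since $\lambda^n_n\neq 0$ the claim follows.

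A fully elementary alternative avoids Theorem~\ref{thm:masseig}: specializing~\eqref{elevation} to $m=n-1$, the factor $\binom{1}{i-j}$ forces $E^{n-1,n}$ to be bidiagonal, so that $\left(\left(E^{n-1,n}\right)^T\mathbf{v}\right)_j=\tfrac{n-j}{n}\mathbf{v}_j+\tfrac{j+1}{n}\mathbf{v}_{j+1}$. Inserting $\mathbf{v}_i=(-1)^{n+i}\binom{n}{i}$ reduces the claim to the single identity $(n-j)\binom{n}{j}=(j+1)\binom{n}{j+1}$, which is immediate. I expect no genuine obstacle here, only bookkeeping: in the conceptual route one must take care that $\bfpi(L^n)$ and $E^{n-1,n}\mathbf{r}$ are compared as degree-$n$ coefficient vectors so that~\eqref{eq:innprod} applies, and one must invoke $\lambda^n_n\neq 0$; in the elementary route the work is entirely in confirming the bidiagonal entries and the binomial cancellation.
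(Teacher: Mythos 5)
Your primary argument is essentially the paper's proof: both reduce to showing the kernel is one-dimensional and then use the $L^2$-orthogonality of $L^n$ to polynomials of degree at most $n-1$, translated through~\eqref{eq:innprod}, together with the eigenrelation $M^n\bfpi(L^n)=\lambda^n_n\bfpi(L^n)$ from Theorem~\ref{thm:masseig} and $\lambda^n_n\neq 0$. The only cosmetic differences are that the paper gets one-dimensionality from the bidiagonal structure of $\left(E^{n-1,n}\right)^T$ where you use injectivity of degree elevation plus rank--nullity, and that the paper substitutes the eigenrelation before invoking orthogonality while you do so after; both orderings are valid. Your elementary alternative, however, is a genuinely different and correct route worth noting: the entries you compute, $E^{n-1,n}_{jj}=\tfrac{n-j}{n}$ and $E^{n-1,n}_{j+1,j}=\tfrac{j+1}{n}$, agree with the paper's own use of~\eqref{elevation} in the proof of Lemma~\ref{lem:recurrence}, and the claim then reduces, as you say, to the identity $(n-j)\binom{n}{j}=(j+1)\binom{n}{j+1}$, which is immediate. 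This direct verification is fully self-contained -- it avoids Theorem~\ref{thm:masseig}, the $L^2$ inner product, and~\eqref{legendreL2norm} altogether -- at the cost of taking the representation~\eqref{legendrebernstein} as the definition of the candidate kernel vector rather than explaining \emph{why} the Legendre coefficients appear; the conceptual route buys exactly that explanation, which is why the paper uses it.
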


\begin{proof}
By~\eqref{elevation}, we have that $\left(E^{n-1,n}\right)^T$ is upper bidiagonal with nonzero entries on the main diagonal and the superdiagonal, and so the null space of $\left(E^{n-1,n}\right)^T$ is one-dimensional. Therefore, it is enough to show that $\bfpi(L^n)$ belongs to the null space of $\left(E^{n-1,n}\right)^T$.

Let $\bfq=E^{n-1,n}\bfp$ be in the range of $E^{n-1,n}$. This means that $p=\pi(\bfp)$ is a polynomial of degree at most $n-1$, and hence
\begin{equation}
0 = \int_0^1 p(x)L^n(x) dx.
\end{equation}
Therefore, Theorem~\ref{thm:masseig} and~\eqref{eq:innprod} imply that
\begin{equation}
0 = (E^{n-1,n}\bfp)^TM^n\bfpi(L^n) = \lambda^n_n \bfq^T\bfpi(L^n)
= \frac{(n!)^2}{(2n+1)!} \bfq^T \bfpi(L^n).
\end{equation}
This implies that $\bfpi(L^n)$ is orthogonal in the Euclidean inner product to the range of $E^{n-1,n}$, and so $\bfpi(L^n)$ belongs to the null space of $\left(E^{n-1,n}\right)^T$.
\end{proof}

\begin{lemma}
\label{lem:recurrence}
For $n \geq 0$, 
let $\bfy^n$ be given by
\begin{equation}
\bfy^n_i = (-1)^{n+i}(n+1)\binom{n+1}{i}.
\end{equation}
Then, for $n \geq 1$, 
\begin{equation}
\label{eq:recurrence}
\bfy^n = (2n+1)\bfpi(L^n)+E^{n-1,n}\bfy^{n-1}.
\end{equation}
\end{lemma}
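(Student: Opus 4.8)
The plan is to verify~\eqref{eq:recurrence} one component at a time, reducing the vector identity to a single elementary binomial identity. Three pieces of data feed the computation: the prescribed closed form $\bfy^n_i=(-1)^{n+i}(n+1)\binom{n+1}{i}$; the Bernstein coefficients of the Legendre polynomial, which by~\eqref{legendrebernstein} are $\bfpi(L^n)_i=(-1)^{n+i}\binom{n}{i}$; and the explicit sparse structure of the elevation matrix $E^{n-1,n}$.

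First I would extract the entries of $E^{n-1,n}$ by setting $m=n-1$ in~\eqref{elevation}. Since $\binom{1}{i-j}$ vanishes unless $i-j\in\{0,1\}$, the matrix is lower bidiagonal, and simplifying the binomial ratios gives $E^{n-1,n}_{i,i}=\tfrac{n-i}{n}$ and $E^{n-1,n}_{i,i-1}=\tfrac{i}{n}$. Applying this to $\bfy^{n-1}$ yields $\bigl(E^{n-1,n}\bfy^{n-1}\bigr)_i=\tfrac{n-i}{n}\bfy^{n-1}_i+\tfrac{i}{n}\bfy^{n-1}_{i-1}$, where I adopt the convention that out-of-range components of $\bfy^{n-1}$ vanish. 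Substituting $\bfy^{n-1}_i=(-1)^{n-1+i}n\binom{n}{i}$ and $\bfy^{n-1}_{i-1}=(-1)^{n+i}n\binom{n}{i-1}$ and carefully tracking the sign change $(-1)^{n-1+i}=-(-1)^{n+i}$, I can factor out the common sign to obtain $\bigl(E^{n-1,n}\bfy^{n-1}\bigr)_i=(-1)^{n+i}\bigl[-(n-i)\binom{n}{i}+i\binom{n}{i-1}\bigr]$.

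Next I would assemble the full right-hand side of~\eqref{eq:recurrence}. Adding $(2n+1)\bfpi(L^n)_i=(2n+1)(-1)^{n+i}\binom{n}{i}$ to the expression above and again factoring out $(-1)^{n+i}$, the claim reduces to the scalar identity $(n+1)\binom{n+1}{i}=(n+1+i)\binom{n}{i}+i\binom{n}{i-1}$. This follows by expanding the left side with Pascal's rule $\binom{n+1}{i}=\binom{n}{i}+\binom{n}{i-1}$ and then using the absorption identity $i\binom{n}{i}=(n+1-i)\binom{n}{i-1}$ to cancel the leftover terms. I expect no genuine obstacle here: the argument is a direct verification, and the only place demanding care is the bookkeeping of signs and the $i\mapsto i-1$ index shift in the bidiagonal product, which is exactly where an off-by-one or sign slip would creep in.
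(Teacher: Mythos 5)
Your proposal is correct and follows essentially the same route as the paper's proof: a componentwise verification using the lower-bidiagonal entries $\tfrac{n-i}{n}$ and $\tfrac{i}{n}$ of $E^{n-1,n}$, the coefficients $\bfpi(L^n)_i=(-1)^{n+i}\binom{n}{i}$, and the reduction to the binomial identity $(n+i+1)\binom{n}{i}+i\binom{n}{i-1}=(n+1)\binom{n+1}{i}$. The only difference is cosmetic---you justify that identity explicitly via Pascal's rule and absorption, where the paper states it in one line---and your sign bookkeeping and out-of-range conventions are handled correctly.
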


\begin{proof}

By~\eqref{elevation} and~\eqref{legendrebernstein}, we have that for each $0\leq i\leq n$,
\begin{align*}
\left[(2n+1)\bfpi(L^n)+E^{n-1,n}\bfy^{n-1}\right]_i &= (2n+1)\bfpi(L^n)_i+\frac{i}{n}\bfy^{n-1}_{i-1}+\frac{n-i}{n}\bfy^{n-1}_i \\
&= (2n+1)(-1)^{n+i}\binom{n}{i} + i(-1)^{n+i}\binom{n}{i-1}-(n-i)(-1)^{n+i}\binom{n}{i} \\
&= (-1)^{n+i}\left[ (n+i+1) \binom{n}{i} + i \binom{n}{i-1}\right] \\
&= (-1)^{n+i} (n+1) \binom{n+1}{i} \\
&= \bfy^n_i.
\end{align*}

\end{proof}

\begin{proposition}
\label{prop:lastcol}
Let $\bfy^n$ be as in Lemma~\ref{lem:recurrence}. Then 
\begin{equation}
M^n\bfy^n=\bfe^n,
\end{equation}
where
\begin{equation}
\bfe^n_i
= \begin{cases}
1, & i=n; \\
0, & \text{otherwise}.
\end{cases}
\end{equation}
\end{proposition}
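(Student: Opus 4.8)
The plan is to argue by induction on $n$, with the recurrence of Lemma~\ref{lem:recurrence} driving the inductive step. For the base case $n=0$ the matrix $M^0$ is the $1\times 1$ matrix $[1]$ and $\bfy^0=(1)$, so $M^0\bfy^0=\bfe^0$ directly. For $n\geq 1$ I would assume $M^{n-1}\bfy^{n-1}=\bfe^{n-1}$ and deduce $M^n\bfy^n=\bfe^n$. The key observation is that I need not compute $M^n\bfy^n$ entrywise; instead I will show that $M^n\bfy^n-\bfe^n$ lies in a fixed one-dimensional subspace and then that the single remaining scalar vanishes.

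First I would apply $\left(E^{n-1,n}\right)^T$ to $M^n\bfy^n$ after substituting the recurrence $\bfy^n=(2n+1)\bfpi(L^n)+E^{n-1,n}\bfy^{n-1}$. The term coming from $\bfpi(L^n)$ is handled by Theorem~\ref{thm:masseig}: taking $i=n$ there (where $E^{n,n}$ is the identity) identifies $\bfpi(L^n)$ as the eigenvector of $M^n$ for $\lambda^n_n$, so $M^n\bfpi(L^n)=\lambda^n_n\,\bfpi(L^n)$, and then $\left(E^{n-1,n}\right)^T\bfpi(L^n)=0$ by Lemma~\ref{lem:null} kills this contribution. The term from $E^{n-1,n}\bfy^{n-1}$ collapses via the mass-elevation identity~\eqref{mass_elev}, since $\left(E^{n-1,n}\right)^T M^n E^{n-1,n}\bfy^{n-1}=M^{n-1}\bfy^{n-1}=\bfe^{n-1}$ by the inductive hypothesis. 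Hence $\left(E^{n-1,n}\right)^T M^n\bfy^n=\bfe^{n-1}$. A short calculation from~\eqref{elevation} shows that the row of $E^{n-1,n}$ indexed by $n$ has its only nonzero entry in column $n-1$, so that $\left(E^{n-1,n}\right)^T\bfe^n=\bfe^{n-1}$ as well. Therefore $M^n\bfy^n-\bfe^n$ is annihilated by $\left(E^{n-1,n}\right)^T$, and Lemma~\ref{lem:null} forces $M^n\bfy^n-\bfe^n=c\,\bfpi(L^n)$ for some scalar $c$.

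It remains to show $c=0$, which is where the only real computation lives. I would pin down $c$ by pairing the identity $M^n\bfy^n-\bfe^n=c\,\bfpi(L^n)$ with $\bfpi(L^n)$ in the Euclidean inner product. Using symmetry of $M^n$ and the eigenvalue relation, $\bfpi(L^n)^T M^n\bfy^n=\lambda^n_n\,\bfpi(L^n)^T\bfy^n$, and from~\eqref{legendrebernstein} together with the definition of $\bfy^n$ this reduces to evaluating $(n+1)\sum_{i=0}^n\binom{n}{i}\binom{n+1}{i}$. The Vandermonde identity gives $\sum_{i=0}^n\binom{n}{i}\binom{n+1}{i}=\binom{2n+1}{n+1}$, and substituting $\lambda^n_n=(n!)^2/(2n+1)!$ collapses the product to exactly $1$. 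On the other hand $\bfpi(L^n)^T\bfe^n=\bfpi(L^n)_n=1$ directly from~\eqref{legendrebernstein}. Thus the left side pairs to $1-1=0$, while the right side is $c\,\|\bfpi(L^n)\|^2$ with $\|\bfpi(L^n)\|^2>0$, forcing $c=0$ and hence $M^n\bfy^n=\bfe^n$.

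I expect the main obstacle to be this scalar normalization rather than the structural reduction: the null-space argument is clean once Lemma~\ref{lem:null} and the eigenvector identification are in hand, but confirming $c=0$ hinges on the binomial bookkeeping---the Vandermonde evaluation and its cancellation against $\lambda^n_n$---landing precisely on the value $1$ that matches $\bfpi(L^n)_n$. An alternative that sidesteps the inner product would be to compute the single entry $\left(M^n\bfy^n\right)_n$ directly and compare it to $1$, but that appears to demand more delicate summation than the eigenvalue pairing, so I would prefer the inner-product route.
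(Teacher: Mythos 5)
Your proof is correct and follows the paper's argument essentially verbatim: the same induction driven by Lemma~\ref{lem:recurrence}, with $M^n\bfy^n-\bfe^n$ shown to be annihilated by $\left(E^{n-1,n}\right)^T$ via~\eqref{mass_elev}, the induction hypothesis, and Lemma~\ref{lem:null}, and the remaining scalar killed by pairing with $\bfpi(L^n)$. The only (harmless) variation is in that final pairing: the paper substitutes the recurrence and uses $\left\|\bfpi(L^n)\right\|_2^2=\binom{2n}{n}=1/\left((2n+1)\lambda^n_n\right)$ together with orthogonality of $\bfpi(L^n)$ to the range of $E^{n-1,n}$, whereas you evaluate $\bfpi(L^n)^TM^n\bfy^n=\lambda^n_n\,\bfpi(L^n)^T\bfy^n$ directly from the explicit formula for $\bfy^n$ via the Vandermonde identity $\sum_{i=0}^n\binom{n}{i}\binom{n+1}{i}=\binom{2n+1}{n+1}$---an equivalent computation landing on the same value $1$.
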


\begin{proof}

We show the result by induction on $n$. Clearly, the result is true for $n=0$. So suppose $M^{n-1}\bfy^{n-1}=\bfe^{n-1}$ for some $n\geq 1$. We show that $M^n\bfy^n=\bfe^n$ by showing that $M^n\bfy^n-\bfe^n$ belongs to both the null space of $\left(E^{n-1,n}\right)^T$ and its orthogonal complement with respect to the Euclidean inner product.

Multiplying~\eqref{eq:recurrence} on the left by $M^n$ gives us that
\begin{equation}
\label{stepping_stone}
M^n\bfy^n = (2n+1)\lambda^n_n\bfpi(L^n)+M^nE^{n-1,n}\bfy^{n-1}.
\end{equation}
Since the null space of $\left(E^{n-1,n}\right)^T$ is spanned by $\bfpi(L^n)$, multiplying the previous equation on the left by $\left(E^{n-1,n}\right)^T$ and using~\eqref{mass_elev} and the induction hypothesis gives us that
\begin{equation}
\left(E^{n-1,n}\right)^TM^n\bfy^n = \bfe^{n-1}.
\end{equation}
Since $\bfe^{n-1}=\left( E^{n-1,n}\right)^T\bfe^n$, the previous equation implies that $M^n\bfy^n-\bfe^n$ belongs to the null space of $\left(E^{n-1,n}\right)^T$.

On the other hand, (\ref{stepping_stone}) implies that
\begin{align*}
\bfpi(L^n)^T(M^n\bfy^n-\bfe^n) &= \bfpi(L^n)^T((2n+1)\lambda^n_n\bfpi(L^n)+M^nE^{n-1,n}\bfy^{n-1}-\bfe^n) \\
&= (2n+1)\lambda^n_n \bfpi(L^n)^T\bfpi(L^n) + \bfpi(L^n)^TM^nE^{n-1,n}\bfy^{n-1} - \bfpi(L^n)_n.
\end{align*}
By~\eqref{legendrebernstein}, we have that
\begin{equation}
\bfpi(L^n)^T\bfpi(L^n) = \sum_{i=0}^n \binom{n}{i}^2 = \binom{2n}{n} = \frac{1}{(2n+1)\lambda^n_n}
\end{equation}
and $\bfpi(L^n)_n = 1$. Therefore,
\begin{equation}
\bfpi(L^n)^T(M^n\bfy^n-\bfe^n) = \bfpi(L^n)^TM^nE^{n-1,n}\bfy^{n-1}.
\end{equation}
Since $\bfpi(L^n)$ is orthogonal in the Euclidean inner product to the range of $E^{n-1,n}$,
\begin{equation}
\bfpi(L^n)^TM^nE^{n-1,n}\bfy^{n-1} = (M^n\bfpi(L^n))^TE^{n-1,n}\bfy^{n-1} = \lambda^n_n \bfpi(L^n)^TE^{n-1,n}\bfy^{n-1} = 0,
\end{equation}
and hence $\bfpi(L^n)^T(M^n\bfy^n-\bfe^n)=0$. Therefore, $M^n\bfy^n-\bfe^n$ also belongs to the orthogonal complement of the null space of $\left( E^{n-1,n}\right)^T$.

\end{proof}

Since $\Delta^n_{nn} = 1$, $M^n\bfy^n=\bfe^n$ if and only if $\widetilde{M}^n\Delta^n\bfy^n=\bfe^n$.  Therefore, the coefficients of $u^n$ are given by 
\begin{equation}
u^n_i = \left(\Delta^n\bfy^n\right)_i=(-1)^{n+i}(n+1)\binom{n}{i}\binom{n+1}{i}.
\end{equation}
We now find the coefficients of $v^{n+1}$. By Theorem~\ref{heinigrost}, the coefficients are given by the last column of the inverse of any nonsingular Hankel extension of $\widetilde{M}^n$.  We use this freedom to choose an extension such that the coefficients of $v^{n+1}$ are given by elevating $u^n$ by one degree in the monomial basis and then reversing the order of the entries. To describe this process, we introduce the following notation: given a vector $\bfx$ of length $(n+1)$, denote by $\bfx^P$ the vector of length $(n+2)$ given by
\begin{equation}
\bfx^P = P\begin{pmatrix} \bfx \\ 0 \end{pmatrix},
\end{equation}
where
\begin{equation}
P 
= \begin{pmatrix} 
& & & & 1 \\
& & & 1 & \\
& & \iddots & & \\
& 1 & & & \\
1 & & & & 
\end{pmatrix}
\end{equation}
is the $(n+2)\times (n+2)$ exchange matrix.

\begin{lemma}
\label{lem:extension}
If $H$ is an $(n+1)\times (n+1)$ nonsingular Hankel matrix of the form
\begin{equation}
H
= \begin{pmatrix}
h_0 & h_1 & h_2 & \cdots & h_n \\
h_1 & h_2 & h_3 & \cdots & h_{n-1} \\
h_2 & h_3 & h_4 & \cdots & h_{n-2} \\
\vdots & \vdots & \vdots & \iddots & \vdots \\
h_n & h_{n-1} & h_{n-2} & \cdots & h_0
\end{pmatrix},
\end{equation} and $\bfx$ satisfies
\begin{equation}
H\bfx = \bfe^n
\end{equation}
with $\bfx_0\neq 0$, then there exists an $(n+2)\times (n+2)$ Hankel extension $\widehat{H}$ of $H$ such that
\begin{equation}
\label{eq:extension}
\widehat{H}\bfx^P=\bfe^{n+1}.
\end{equation}
\end{lemma}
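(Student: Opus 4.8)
The plan is to exploit the defining relation $H\bfx = \bfe^n$ together with the extra symmetry visible in the displayed form of $H$, so that all but two of the required equations hold automatically and the last two conditions are met by the two genuinely new entries of the extension. Write $H_{ij} = a_{i+j}$ for the antidiagonal sequence $a_0, \dots, a_{2n}$; the displayed form of $H$ shows $a_k = a_{2n-k}$, so $H$ is centrosymmetric. Any $(n+2)\times(n+2)$ Hankel matrix whose leading $(n+1)\times(n+1)$ block equals $H$ is obtained by appending exactly two new antidiagonal values, $a_{2n+1}$ and $a_{2n+2}$; these are the only free parameters, and the goal is to choose them so that every component of $\widehat{H}\bfx^P - \bfe^{n+1}$ vanishes.

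First I would rewrite the components of $\widehat{H}\bfx^P$. Since $\bfx^P = (0, \bfx_n, \bfx_{n-1}, \dots, \bfx_0)^T$, the leading zero annihilates the first column of $\widehat{H}$, and reindexing the remaining sum (setting $\bfz_l = \bfx_{n-l}$, the reversal of $\bfx$) gives
\begin{equation}
(\widehat{H}\bfx^P)_i = \sum_{l=0}^n a_{i+1+l}\,\bfz_l, \qquad 0 \le i \le n+1,
\end{equation}
where $a_{2n+1}$ and $a_{2n+2}$ denote the appended values. The centrosymmetry supplies the key auxiliary identity: writing $P$ for the exchange matrix of size $n+1$, one has $\bfz = P\bfx$ and $HP = PH$, so $H\bfz = PH\bfx = P\bfe^n$, the vector whose only nonzero entry, a $1$, sits in position $0$.

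Next I would separate the components according to the range of the shifted index $i+1+l$. For $0 \le i \le n-1$ this index never exceeds $2n$, so $a_{i+1+l} = H_{i+1,l}$ and the sum is precisely $(H\bfz)_{i+1} = (P\bfe^n)_{i+1} = 0$, which is exactly the required value. Hence the first $n$ equations hold regardless of the appended entries. Only rows $i=n$ and $i=n+1$ reach the new antidiagonals: in row $i=n$ the value $a_{2n+1}$ appears multiplied by $\bfz_n = \bfx_0$ together with already-fixed quantities, and in row $i=n+1$ the value $a_{2n+2}$ appears multiplied by $\bfx_0$, alongside the now-determined $a_{2n+1}$ and known terms. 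Because $\bfx_0 \neq 0$, the equation $(\widehat{H}\bfx^P)_n = 0$ determines $a_{2n+1}$ uniquely, and then $(\widehat{H}\bfx^P)_{n+1} = 1$ determines $a_{2n+2}$ uniquely; these choices yield the desired $\widehat{H}$.

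The crux — the step that makes the argument collapse — is noticing the centrosymmetry $a_k = a_{2n-k}$ and turning $H\bfx = \bfe^n$ into $H\bfz = P\bfe^n$ for the reversed vector; without this reflection the vanishing of the first $n$ components is not apparent. The rest is bookkeeping: tracking precisely which terms land on the two appended antidiagonals and invoking $\bfx_0 \neq 0$ to solve the resulting pair of scalar equations in succession. I would pay closest attention to the index arithmetic in rows $i=n$ and $i=n+1$, where an off-by-one in the range of $i+1+l$ is the easiest error to make.
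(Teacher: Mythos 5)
Your proof is correct and follows essentially the same route as the paper: both arguments use the palindromic structure $a_k = a_{2n-k}$ of the displayed $H$ to show the first $n$ equations follow from $H\bfx = \bfe^n$, then solve for the two appended antidiagonal entries in succession using $\bfx_0 \neq 0$. The only cosmetic difference is that you move the exchange matrix onto the vector (via $HP = PH$ and the reversal $\bfz = P\bfx$), while the paper moves it onto $\widehat{H}$ and reads the equations off the permuted matrix directly.
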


\begin{proof}

Since $\bfx^P$ is the action of $\begin{pmatrix} \bfx & 0\end{pmatrix}^T$ under the exchange matrix $P$, we can instead apply the exchange matrix to $\widehat{H}$ and show that there exist $\alpha$ and $\beta$ such that 
\begin{equation}
\begin{pmatrix}
h_{n-1} & h_n & \cdots & h_3 & h_2 & h_1 & h_0 \\
h_{n-2} & h_{n-1} & \cdots & h_4 & h_3 & h_2 & h_1 \\
h_{n-3} & h_{n-2} & \cdots & h_5 & h_4 & h_3 & h_2 \\
h_{n-4} & h_{n-3} & \cdots & h_6 & h_5 & h_4 & h_3 \\
\vdots & \vdots & \ddots & \vdots & \vdots & \vdots & \vdots \\
\alpha & h_0 & \cdots & h_{n-3} & h_{n-2} & h_{n-1} & h_n \\
\beta & \alpha & \cdots & h_{n-4} & h_{n-3} & h_{n-2} & h_{n-1}
\end{pmatrix}
\begin{pmatrix}
\bfx_0 \\
\bfx_1 \\
\bfx_2 \\
\bfx_3 \\
\vdots \\
\bfx_n \\
0
\end{pmatrix}
= \begin{pmatrix}
0 \\
0 \\
0 \\
0 \\
\vdots \\
0 \\
1
\end{pmatrix}.
\end{equation}
The first $n$ equations are satisfied by assumption. Therefore, choosing
\begin{equation}
\alpha = -\frac{1}{\bfx_0}\sum_{i=0}^{n-1} h_i\bfx_{i+1}
\end{equation}
and
\begin{equation}
\beta = \frac{1}{\bfx_0}\left( 1 - \alpha\bfx_1 - \sum_{i=0}^{n-2} h_i\bfx_{i+2}\right)
\end{equation}
gives the desired result.

\end{proof}

Therefore, the coefficients of $v^{n+1}$ are given by
\begin{equation}
v^{n+1}_i = \left( \Delta^n\bfy^n\right)^P_i = (-1)^{i+1}(n+1)\binom{n+1}{i}\binom{n}{i-1},
\end{equation}
and hence Theorem~\ref{heinigrost} and (\ref{bezout_entries}) imply that
\begin{theorem}
\label{thm:hankelinverse}
\begin{align*}
\left(\widetilde{M}^n\right)^{-1}_{ij} &= \frac{1}{v^{n+1}_{n+1}}\sum_{k=0}^{\min\left\{i,n-j\right\}}\left( u^n_{j+k+1}v^{n+1}_{i-k}-u^n_{i-k}v^{n+1}_{j+k+1}\right) \\ 
&= (-1)^{i+j} \sum_k \left( 2k+1-i+j\right) \binom{n+1}{i-k}^2\binom{n+1}{j+k+1}^2.
\end{align*}
\end{theorem}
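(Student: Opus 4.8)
The first equality is essentially bookkeeping: the preceding discussion identifies $u^n$ as the last column of $\left(\widetilde{M}^n\right)^{-1}$ and $v^{n+1}$ as the last column of the inverse of a Hankel extension, so Theorem~\ref{heinigrost} gives $\left(\widetilde{M}^n\right)^{-1} = \frac{1}{v^{n+1}_{n+1}}\Bez(u^n,v^{n+1})$, and substituting the closed form~\eqref{bezout_entries} for the Bézout entries reproduces the first line verbatim. The mathematical content is therefore the second equality, an explicit evaluation of that Bézout sum, and the plan is to substitute the closed forms for $u^n$ and $v^{n+1}$ into each summand and simplify.

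First I would record $v^{n+1}_{n+1} = (-1)^n(n+1)$. Then I would observe that both products $u^n_{j+k+1}v^{n+1}_{i-k}$ and $u^n_{i-k}v^{n+1}_{j+k+1}$ carry the \emph{same} global sign $(-1)^{n+i+j}$, the same factor $(n+1)^2$, and the same pair of ``upper'' binomials $\binom{n+1}{j+k+1}\binom{n+1}{i-k}$. Factoring these out reduces the summand to
\[
(-1)^{n+i+j}(n+1)^2\binom{n+1}{j+k+1}\binom{n+1}{i-k}\left[\binom{n}{j+k+1}\binom{n}{i-k-1} - \binom{n}{i-k}\binom{n}{j+k}\right].
\]
To collapse the bracket I would rewrite every degree-$n$ binomial in terms of its degree-$(n+1)$ counterpart using the elementary ratios $\binom{n}{m} = \frac{n+1-m}{n+1}\binom{n+1}{m}$ and $\binom{n}{m-1} = \frac{m}{n+1}\binom{n+1}{m}$. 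This pulls out a further $\binom{n+1}{j+k+1}\binom{n+1}{i-k}/(n+1)^2$ and leaves the purely polynomial bracket $(n-j-k)(i-k) - (n+1-i+k)(j+k+1)$.

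The crux of the argument is that this apparently quadratic expression in $k$ collapses to a single linear factor: upon expanding, the $k^2$, $ij$, $ik$, and $jk$ terms all cancel, leaving exactly $(n+1)(i-j-2k-1) = -(n+1)(2k+1-i+j)$. After this collapse every remaining power of $(n+1)$ cancels against $v^{n+1}_{n+1}$, each surviving upper binomial is squared, and the summand becomes a constant multiple of $(2k+1-i+j)\binom{n+1}{i-k}^2\binom{n+1}{j+k+1}^2$, which is the claimed form.

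The step I expect to be most delicate is the sign and convention bookkeeping, not the identity itself. One must carry three sources of sign simultaneously — the $(-1)^{n+m}$ in $u^n$, the $(-1)^{m+1}$ in $v^{n+1}$, and the $1/v^{n+1}_{n+1} = (-1)^n/(n+1)$ — and one must reconcile the sign convention for $\Bez(u,v)$ invoked in Heinig--Rost's Theorem~\ref{heinigrost} with the convention fixed here by the numerator $u(s)v(t)-u(t)v(s)$, since the two differ by an overall sign (a direct check of the collapse at $n=1$ exposes precisely this sign and is worth doing). A secondary point is to verify that the boundary instances of the degree-$n$ binomials — for example $\binom{n}{i-k-1}$ with $i-k-1=-1$, or $\binom{n}{j+k+1}$ with $j+k+1=n+1$ — behave correctly under the ratio identities, so that the restricted range $0\leq k\leq\min\{i,n-j\}$ in the Bézout sum matches the unrestricted $\sum_k$ of the stated form; once the vanishing-binomial convention is in force this is automatic.
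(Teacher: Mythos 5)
Your proposal is correct and follows the same route as the paper: the paper likewise obtains the theorem by feeding $u^n$ and $v^{n+1}$ into Theorem~\ref{heinigrost} and~\eqref{bezout_entries}, but it asserts the resulting simplification without showing any algebra, so the computation you supply is precisely the omitted content. Each of your steps checks out: both products $u^n_{j+k+1}v^{n+1}_{i-k}$ and $u^n_{i-k}v^{n+1}_{j+k+1}$ do carry the common factor $(-1)^{n+i+j}(n+1)^2\binom{n+1}{i-k}\binom{n+1}{j+k+1}$; the ratio identities $\binom{n}{m}=\frac{n+1-m}{n+1}\binom{n+1}{m}$ and $\binom{n}{m-1}=\frac{m}{n+1}\binom{n+1}{m}$ (valid at the boundary cases by the vanishing-binomial convention, which also justifies passing from the truncated to the unrestricted sum over $k$) reduce the bracket to $(n-j-k)(i-k)-(n+1-i+k)(j+k+1)$; and this does collapse to $(n+1)(i-j-2k-1)=-(n+1)(2k+1-i+j)$, with $v^{n+1}_{n+1}=(-1)^n(n+1)$ as you state.

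The sign you flag is a genuine inconsistency in the paper's stated conventions, not a defect of your argument. With the Bézoutian defined by the numerator $u(s)v(t)-u(t)v(s)$ and with $u$, $v$ as in Theorem~\ref{heinigrost}, the first displayed line of the theorem evaluates to the \emph{negative} of the second: your computation yields the overall factor $(-1)^{i+j+1}$, and the case $n=0$ confirms this concretely, since $u^0=(1,0)$ and $v^1=(0,1)$ give $\frac{1}{v^1_1}\left(u^0_1v^1_0-u^0_0v^1_1\right)=-1$ while $\left(\widetilde{M}^0\right)^{-1}=1$. So Theorem~\ref{heinigrost} must be read with the opposite sign convention (equivalently, with $\Bez(v,u)$, or with numerator $u(t)v(s)-u(s)v(t)$), after which your derivation produces the stated $(-1)^{i+j}$ exactly. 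The slip is harmless downstream, because the paper only ever uses the second line, which is the true inverse, but your insistence on the $n=1$ sanity check is exactly the right instinct here.
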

Applying the identity $\left(M^n\right)^{-1} = \left( \Delta^n\right)^{-1} \left( \widetilde{M}^n\right)^{-1} \left( \Delta^n\right)^{-1}$ gives the result from Theorem~\ref{thm:inverseformula}.

\subsubsection{Bernstein-Legendre conversion}
\label{ssec:convert}

We can use Theorem~\ref{thm:masseig} to construct the spectral (or eigenvector) decomposition 
\begin{equation}
M^n = Q^n \Lambda^n (Q^n)^T,
\end{equation}
where $\Lambda^n$ contains the eigenvalues and $Q^n$ is an orthogonal matrix of eigenvectors. Since $M^n$ is symmetric and all of its eigenvalues are distinct, the eigenvectors are orthogonal. Therefore, all that remains is to normalize the eigenvectors in the 2-norm.
\begin{proposition}
\label{prop:legendre2norm}
\begin{equation}
\label{eq:legendre2norm}
\left\| E^{k,n}\bfpi(L^k)\right\|^2_2 = \frac{1}{(2k+1)\lambda^n_k}.
\end{equation}
\end{proposition}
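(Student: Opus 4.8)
The plan is to exploit the dual role played by $\bfz := E^{k,n}\bfpi(L^k)$. By Theorem~\ref{thm:masseig} this vector is the eigenvector of $M^n$ associated with $\lambda^n_k$, but by construction it is also the coefficient vector of $L^k$ expressed in the degree-$n$ Bernstein basis. The idea is to compute the scalar $\bfz^T M^n \bfz$ in these two different ways and compare the results.

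First I would note that raising the degree of a Bernstein representation leaves the underlying polynomial unchanged, so $\pi(\bfz) = L^k$ as a polynomial of degree $n$. Applying \eqref{eq:innprod} with $p = q = \pi(\bfz)$ and then the Legendre normalization \eqref{legendreL2norm} gives
\begin{equation}
\bfz^T M^n \bfz = \left\| \pi(\bfz) \right\|_{L^2}^2 = \left\| L^k \right\|_{L^2}^2 = \frac{1}{2k+1}.
\end{equation}

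Next I would use the eigenvector relation $M^n \bfz = \lambda^n_k \bfz$ supplied by Theorem~\ref{thm:masseig} to write $\bfz^T M^n \bfz = \lambda^n_k\, \bfz^T \bfz = \lambda^n_k \left\| \bfz \right\|_2^2$. Equating this with the previous display and dividing by $\lambda^n_k$ (which is strictly positive for $0 \leq k \leq n$, as its closed form makes clear) yields $\left\| E^{k,n}\bfpi(L^k) \right\|_2^2 = \frac{1}{(2k+1)\lambda^n_k}$, as desired.

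There is no genuinely hard step here. The only point meriting a word of justification is the identification $\pi\!\left(E^{k,n}\bfpi(L^k)\right) = L^k$, which is immediate from the defining property of the elevation matrix $E^{k,n}$ as the map that preserves a polynomial while re-expressing its Bernstein coefficients at the higher degree $n$. Everything else is a one-line manipulation of the $M^n$-quadratic form; in contrast to the more involved arguments elsewhere in this section, no binomial-identity bookkeeping or induction is required.
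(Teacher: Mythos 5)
Your proposal is correct and is essentially identical to the paper's own proof: both evaluate the quadratic form $\bfz^T M^n \bfz$ for $\bfz = E^{k,n}\bfpi(L^k)$ once via~\eqref{eq:innprod} and the Legendre normalization~\eqref{legendreL2norm}, and once via the eigenvector relation from Theorem~\ref{thm:masseig}, then equate. Your explicit remark that degree elevation preserves the underlying polynomial is the same fact the paper uses implicitly in writing $\| L^k \|^2_{L^2} = \| E^{k,n}\bfpi(L^k) \|_{M^n}^2$.
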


\begin{proof}

The result follows from~\eqref{legendreL2norm} and~\eqref{eq:innprod}:
\begin{equation}
\frac{1}{2k+1} = \| L^k \|^2_{L^2} = 
\| E^{k,n} \bfpi(L^k) \|_{M^n}^2
= \left( E^{k,n}\bfpi(L^k) \right)^T M^n E^{k,n}\bfpi(L^k)
= \lambda^n_k \left\| E^{k,n}\bfpi(L^k)\right\|^2_2.
\end{equation}
\end{proof}

Proposition~\ref{prop:legendre2norm} implies that
\begin{equation}
\label{eq:Qn}
Q^n = 
\begin{pmatrix}
\sqrt{\lambda^n_0}E^{0,n}\bfpi(L^0) & | & \sqrt{3\lambda^n_1}E^{1,n}\bfpi(L^1) & | & \cdots & | & \sqrt{(2n+1)\lambda^n_n}\bfpi(L^n)
\end{pmatrix}
\end{equation}
and
\begin{equation}
\Lambda^n = \diag\left(\lambda^n_0, \lambda^n_1, \dots, \lambda^n_n\right).
\end{equation}

Equation~\eqref{eq:Qn} characterizes $Q^n$ and suggests an algorithm for its construction -- begin with $\bfpi(L^i)$ and elevate it to degree $n$.  This requires $\mathcal{O}(n^3)$ operations -- there are $\mathcal{O}(n)$ columns and each one needs to be elevated $\mathcal{O}(n)$ times at $\mathcal{O}(n)$ operations per elevation.  However, we can also adapt the classical 3-term recurrence for the Legendre polynomials to give a $\mathcal{O}(n^2)$ process for constructing $Q^n$.

We begin the process by constructing the degree $n$ representation of $L^0$ -- which is simply the vector of ones.  We can similarly construct the coefficients for $L^1$.  Then, we proceed inductively.  Assuming that $L^i$ has been constructed (for $1 \leq i < n$),  the critical step in the recurrence is to multiply $L^i(x)$ by $x$.  Given any polynomial of degree $n$ 
\[
p(x) = \sum_{i=0}^n p_i B^n_i(x),
\]
we have
\[
x p(x) = \sum_{i=0}^n p_i \binom{n}{i} x^{i+1} (1-x)^{n-i}
= \sum_{i=0}^{n+1} \tilde{p}_i B^{n+1}_i(x),
\]
where $\tilde{p}_0 = 0$ and $\tilde{p}_i = \tfrac{i p_{i-1}}{n+1}$ for $1 \leq i \leq n+1$.  This $\mathcal{O}(n)$ computes $x p(x)$ in the degree $n+1$ basis, but we need $x L^i(x)$ in the Bernstein basis of degree $n$.  That is, if $\tilde{\mathbf{p}}$ holds the degree $n+1$ Bernstein coefficients, we need to find $\mathbf{q}$ such that
\begin{equation}
E^{n,n+1} \mathbf{q} = \tilde{\mathbf{p}},
\end{equation}
which is a consistent system of $n+2$ equations in $n+1$ unknowns.  Several $\mathcal{O}(n)$ algorithms for this are possible, but Gaussian elimination on the (tridiagonal) normal equations seems stable in practice.  Algorithm~\ref{alg:Qn} summarizes building the entire matrix $Q^n$ using the three-term recurrence and the tridiagonal degree reduction, where $E$ denotes the matrix $E^{n,n+1}$.

\begin{algorithm}
\caption{Builds the orthogonal matrix $Q^n$ of eigenvectors of $M^n$}
\label{alg:Qn}
\begin{algorithmic}
\STATE $Q^n[:,0]\gets E^{0,n}\bfpi(L^0)$
\STATE $Q^n[:,1]\gets E^{1,n}\bfpi(L^1)$
\FOR{$j=2$ \TO $n-1$}
\STATE{$\widetilde{\mathbf{p}}\gets \bfpi(x\pi(Q[:,j-1])(x))$}
\STATE{$\mathbf{q}\gets \left(E^TE\right)^{-1}E^T\widetilde{\mathbf{p}}$}
\STATE{$Q^n[:,j]\gets \frac{2j-1}{j}\left(2\mathbf{q}-Q^n[:,j-1]\right)-\frac{j-1}{j}Q^n[:,j-2]$}
\ENDFOR
\STATE $Q^n[:,n]\gets \bfpi(L^n(x))$
\FOR{$j=0$ \TO $n$}
\STATE{$Q^n[:,j]\gets \sqrt{(2j+1)\lambda^n_j} Q^n[:,j]$}
\ENDFOR
\end{algorithmic}
\end{algorithm}
Consequently, $(M^n)^{-1} = Q^n \left( \Lambda^n \right)^{-1} \left( Q^n \right)^T$ can be applied to a vector by multiplying by two dense orthogonal matrices and a diagonal scaling.

\subsection{Decomposing the inverse}
\label{ssec:factor}
The proof of Theorem~\ref{thm:hankelinverse} suggests the following decomposition of $(M^n)^{-1}$ into diagonal, Toeplitz, and Hankel matrices:
\begin{theorem}
\label{thm:factorinverse}
Let $T^n$ and $\widetilde{T}^n$ be the Toeplitz matrices given by
\begin{equation}
T^n_{ij} = (-1)^{i-j} \binom{n+1}{i-j}^2\qquad \text{and}\qquad \widetilde{T}^n_{ij} = (i-j)T^n_{ij},
\end{equation}
and let $H^n$ and $\widetilde{H}^n$ be the Hankel matrices given by
\begin{equation}
H^n_{ij} = (-1)^{i+j+1}\binom{n+1}{i+j+1}^2 \qquad \text{and} \qquad \widetilde{H}^n_{ij} = (i+j+1)H^n_{ij}.
\end{equation}
Then
\begin{equation}
\label{eq:factorinverse}
\left(M^n\right)^{-1} = \left( \Delta^n\right)^{-1}\left[ \widetilde{T}^nH^n-T^n\widetilde{H}^n\right] \left( \Delta^n \right)^{-1}.
\end{equation}
\end{theorem}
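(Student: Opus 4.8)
The plan is to reduce the claim to a statement about $\widetilde{M}^n$ alone and then verify it by a direct entrywise computation of the matrix product. Applying the conjugation identity $(M^n)^{-1} = (\Delta^n)^{-1}(\widetilde{M}^n)^{-1}(\Delta^n)^{-1}$ used just after Theorem~\ref{thm:hankelinverse}, it suffices to establish
\begin{equation}
(\widetilde{M}^n)^{-1} = \widetilde{T}^n H^n - T^n \widetilde{H}^n.
\end{equation}
Theorem~\ref{thm:hankelinverse} already supplies a closed form for the left-hand side, namely $(\widetilde{M}^n)^{-1}_{ij} = (-1)^{i+j}\sum_k (2k+1-i+j)\binom{n+1}{i-k}^2\binom{n+1}{j+k+1}^2$, so the entire task is to show that the right-hand product reproduces this sum.

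First I would expand the two products at entry $(i,j)$, summing over the inner index $k$ from $0$ to $n$. From the definitions, $\widetilde{T}^n_{ik} = (i-k)(-1)^{i-k}\binom{n+1}{i-k}^2$ and $H^n_{kj} = (-1)^{k+j+1}\binom{n+1}{k+j+1}^2$, so every term of $(\widetilde{T}^n H^n)_{ij}$ carries the sign $(-1)^{i-k}(-1)^{k+j+1} = (-1)^{i+j+1}$ and the binomial product $\binom{n+1}{i-k}^2\binom{n+1}{k+j+1}^2$, weighted by the linear factor $(i-k)$. The term $(T^n\widetilde{H}^n)_{ij}$ is identical except that the linear weight is instead $(k+j+1)$. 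Subtracting, the common sign and binomial product factor out and the linear weights combine as $(i-k)-(k+j+1) = -(2k+1-i+j)$, which gives
\begin{equation}
(\widetilde{T}^n H^n - T^n \widetilde{H}^n)_{ij} = (-1)^{i+j}\sum_k (2k+1-i+j)\binom{n+1}{i-k}^2\binom{n+1}{k+j+1}^2,
\end{equation}
matching Theorem~\ref{thm:hankelinverse} exactly.

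The remaining point, and really the only place requiring care, is to confirm that the summation over $k$ agrees with the truncated B\'ezout sum $\sum_{k=0}^{\min\{i,n-j\}}$ appearing in~\eqref{bezout_entries}. Here the standard convention $\binom{n+1}{\ell}=0$ for $\ell < 0$ or $\ell > n+1$ does the work: the factor $\binom{n+1}{i-k}^2$ forces $k \leq i$ and $\binom{n+1}{k+j+1}^2$ forces $k \leq n-j$, so the full matrix sum over $k = 0,\dots,n$ has exactly the same nonzero support as the truncated sum. There is no serious obstacle beyond this sign-and-index bookkeeping; the genuine content is the recognition that the Toeplitz factors naturally encode the $i-k$ argument while the Hankel factors encode the $j+k+1$ argument, and that the weight $2k+1-i+j$ in the B\'ezout formula is precisely the negated difference of these two indices. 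This is exactly the structure that motivates defining $\widetilde{T}^n$ and $\widetilde{H}^n$ through the respective linear weights $i-j$ and $i+j+1$.
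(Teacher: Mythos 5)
Your proof is correct and follows essentially the same route as the paper: reduce to $(\widetilde{M}^n)^{-1} = \widetilde{T}^n H^n - T^n \widetilde{H}^n$ via the diagonal conjugation from~\eqref{eq:factorm}, then match the product entrywise against Theorem~\ref{thm:hankelinverse} by splitting the weight $2k+1-i+j$ into the Toeplitz index $(i-k)$ and Hankel index $(j+k+1)$, exactly as in the paper (read in the opposite direction). Your explicit check that the vanishing-binomial convention makes the unrestricted sum over $k$ agree with the truncated B\'ezout sum $\sum_{k=0}^{\min\{i,n-j\}}$ in~\eqref{bezout_entries} is a detail the paper leaves implicit, and is a welcome addition.
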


\begin{proof}
By Theorem~\ref{thm:hankelinverse}, we have that
\begin{align*}
\left(\widetilde{M}^n\right)^{-1}_{ij} &= (-1)^{i+j} \sum_k \left( 2k+1-i+j\right) \binom{n+1}{i-k}^2\binom{n+1}{j+k+1}^2 \\
&= \sum_k \left[ (-1)^{i-k}(i-k)\binom{n+1}{i-k}^2\right] \left[ (-1)^{j+k+1}\binom{n+1}{j+k+1}^2\right] \\ 
&\qquad - \sum_k \left[ (-1)^{i-k}\binom{n+1}{i-k}^2\right] \left[ (-1)^{j+k+1}(j+k+1)\binom{n+1}{j+k+1}^2\right].
\end{align*}
Therefore, $\left(\widetilde{M}^n\right)^{-1} = \widetilde{T}^nH^n-T^n\widetilde{H}^n$, and so the result follows from~\eqref{eq:factorm}. 
\end{proof}
This result implies a superfast $\mathcal{O}(n \log n)$ algorithm, but our numerical experiments suggest that it is highly unstable.

\section{Applying the inverse}
\label{sec:apply}
Now, we describe several approaches to applying $M^{-1}$ to a vector. 

\paragraph{Cholesky factorization}
$M^n$ is symmetric and positive definite and therefore admits a Cholesky factorization
\begin{equation}
M^n = L L^T,
\end{equation}
where $L$ is lower triangular with positive diagonal entries~\cite{strang}.  Widely available in libraries, computing $L$ requires $\mathcal{O}(n^3)$ operations, and each of the subsequent triangular solves require $\mathcal{O}(n^2)$ operations to perform.  Our numerical results below suggest that it is one of the more stable and accurate methods under consideration, although our technique based on the eigendecomposition has similar accuracy and $\mathcal{O}(n^2)$ complexity for the startup phase.

\paragraph{Exact inverse}
In light of Theorem~\ref{thm:inverseformula}, we can directly form $M^{-1}$.  Since the formula for each entry requires a sum, forming the inverse requires $\mathcal{O}(n^3)$ operations.  The inverse matrix can then be applied to any vector using the standard algorithm and in $\mathcal{O}(n^2)$ operations.  This has the same startup and per-solve complexity as the Cholesky factorization, although the constants are different.

\paragraph{Spectral decomposition}
In Section~\ref{ssec:convert}, we showed how to compute the eigendecomposition of $M^n$ and hence express its inverse as
\begin{equation}
\left( M^n \right) ^{-1} = Q^n  \left( \Lambda^n \right)^{-1} \left(Q^n \right)^T.
\end{equation}
The inverse can be applied by two (dense) matrix multiplications and a diagonal scaling, requiring $\mathcal{O}(n^2)$ operations.  Thanks to Algorithm~\ref{alg:Qn}, we have only an $\mathcal{O}(n^2)$ startup phase.

\paragraph{DFT-based application}
Theorem~\ref{thm:factorinverse} implies that we can multiply by the inverse of $M^n$ in $\mathcal{O}(n \log n)$ operations.  We invert $\Delta^n$ onto a given vector, and then all of the Toeplitz and Hankel matrices can be applied via circulant embedding before inverting $\Delta^n$ onto the result.  Moreover, the operations can be fused together so that some FFTs are re-used and FFT/inverse FFT pairs cancel in~\eqref{eq:factorinverse}.  However, our numerical results reveal this approach to be quite unstable in practice, becoming wildly inaccurate long before one could hope to win from the super-fast algorithm.  Therefore, we do not go into much detail on this approach.

\section{Conditioning and accuracy}
\label{sec:cond}
As a corollary of Theorem~\ref{thm:masseig}, $M^n$ is terribly ill-conditioned as $n$ increases.  For each $n$, the minimal eigenvalue occurs when $i=n$, which is
\begin{equation}
\lambda_{\min}(M^n) = \frac{(n!)^2}{(2n+1)!}.
\end{equation}
Similarly, the maximal eigenvalue occurs when $i=0$:
\begin{equation}
\lambda_{\max}(M^n) = \frac{(n!)^2}{(n+1)!n!} = \frac{1}{n+1}.
\end{equation}

Given these extremal eigenvalues, the 2-norm condition number is
\begin{equation}
\label{eq:kappa2}
\kappa_2(M^n) = \frac{(2n+1)!}{(n+1)! n!}.
\end{equation}

While this conditioning seems spectacularly bad, in practice, the Bernstein basis seems to give entirely satisfactory results at moderately high orders of approximation~\cite{kirby2014low}.  Here, we hope to give at least a partial explanation of this phenomenon.   Recall that solving $M^n\bfc = \bfb$ exactly yields the Bernstein expansion coefficients of the best $L^2$ polynomial approximation to a function $f$ whose moments against the Bernstein basis are contained in $\bfb$.  Consequently, if our solution process yields some $\bfc + \delta \bfc$ instead of $\bfc$, the $L^2$ norm of the polynomial encoded by $\delta \bfc$ has more direct relevance than the size of $\bfc$ in the Euclidean norm.  On the other hand, the perturbation $\delta \bfb$ exists as an array of numbers (say, the roundoff error in computing the moments of $f$ by numerical integration), and we continue to use the Euclidean norm here.

In standard floating point analysis, suppose we have computed the solution to a perturbed solution
\begin{equation}
M^n \left(\bfc + \delta \bfc\right) = \bfb + \delta \bfb,
\end{equation}
and while a backward stable algorithm yields a small $\delta \bfb$, the perturbation in the solution $\delta \bfc$ might itself be significant.  Equivalently, the perturbation $\delta\bfc$ satisfies
\begin{equation}
\label{eq:perturbed}
M^n \delta\bfc = \delta\bfb.
\end{equation}

While classical error perturbation and conditioning analysis would estimate $\| \delta \bfc \|$ in the Euclidean norm, we wish to consider $ \| \delta \bfc \|_{M^n}$, which is the $L^2$ norm of the perturbation in the computed polynomial.

Taking the inner product of~\eqref{eq:perturbed} with $\delta\bfc$, we have:
\begin{equation}
\| \delta \bfc \|^2_{M^n} =
\delta\bfc^T \delta\bfb
\leq \| \delta \bfc \|_2 \| \delta \bfb \|_2
\leq \| (M^n)^{-1} \|_2 \| \delta \bfb \|_2^2,
\end{equation}
so that
\begin{equation}
\label{eq:Minvboundgood}
\| \delta \bfc \|_{M^n} \leq \left(\sqrt{\| (M^n)^{-1} \|_2} \right) \| \delta \bfb \|_2 = \sqrt{\lambda_{\min}(M^n)} \| \delta \bfb \|_2.
\end{equation}
In other words, for a perturbation $\delta \bfb$ of fixed 2-norm, the perturbation $\delta \bfc$ is much smaller measured in the $M^n$ norm than in the 2-norm -- the amplification factor is only the square root as large.

This discussion suggests the following matrix norm.  We think of $M^n$ as an operator mapping $\mathbb{R}^{n+1}$ onto itself.  However, we equip the domain with the $M^n$-weighted norm and the range with the Euclidean norm.  Then, we define the operator norm
\begin{equation}
\label{eq:Mto2}
\| A \|_{M^n\rightarrow 2}
= \max_{\|\bfc\|_{M^n} = 1} \| A \bfc \|_2,
\end{equation}
and going the opposite direction,
\begin{equation}
\label{eq:2toM}
\| A \|_{2\rightarrow M^n}
= \max_{\|\bfc\|_2 = 1} \| A \bfc \|_{M^n}.
\end{equation}
We will also need the matrix norm
\begin{equation}
\| A \|_{M^n} = \max_{\| \bfc \|_{M^n} = 1} \| A \bfc \|_{M^n},
\end{equation}
using the $M^n$-weight in both the domain and range.

In light of~\eqref{eq:2toM}, we can interpret~\eqref{eq:Minvboundgood} as
\begin{proposition}
The norm of $(M^{n})^{-1}$ satisfies
\begin{equation}
\| (M^n)^{-1} \|_{2\rightarrow M^n} = \sqrt{\lambda_{\min}(M^n)}.
\end{equation}
\end{proposition}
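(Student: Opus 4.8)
The plan is to evaluate the operator norm~\eqref{eq:2toM} directly, reducing it to a Rayleigh quotient for $(M^n)^{-1}$ and then identifying the result with the bound already recorded in~\eqref{eq:Minvboundgood}. First I would unfold the definition: taking $A = (M^n)^{-1}$ in~\eqref{eq:2toM} and squaring gives
\begin{equation*}
\left\| (M^n)^{-1} \right\|_{2\rightarrow M^n}^2 = \max_{\|\bfc\|_2 = 1} \left\| (M^n)^{-1}\bfc \right\|_{M^n}^2 .
\end{equation*}
Next I would simplify the objective using $\|\bfp\|_{M^n}^2 = \bfp^T M^n \bfp$ together with the symmetry of $M^n$: the middle factor $M^n$ cancels one inverse against the other, so that $\left\| (M^n)^{-1}\bfc \right\|_{M^n}^2 = \bfc^T (M^n)^{-1}\bfc$, the Euclidean Rayleigh quotient of $(M^n)^{-1}$.

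With the problem in this form, I would maximize. Since $(M^n)^{-1}$ is symmetric positive definite, the maximum of $\bfc^T (M^n)^{-1}\bfc$ subject to $\|\bfc\|_2 = 1$ equals its largest eigenvalue $\lambda_{\max}\!\left((M^n)^{-1}\right) = \left\| (M^n)^{-1}\right\|_2$, attained at the corresponding unit eigenvector. Concretely, the top eigenvector of $(M^n)^{-1}$ is the eigenvector of $M^n$ belonging to $\lambda^n_n = \lambda_{\min}(M^n)$, which by Theorem~\ref{thm:masseig} is a scalar multiple of $\bfpi(L^n)$; substituting this choice of $\bfc$ shows the bound is attained and not merely an inequality. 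This yields the exact identity
\begin{equation*}
\left\| (M^n)^{-1} \right\|_{2\rightarrow M^n} = \sqrt{\left\| (M^n)^{-1}\right\|_2}.
\end{equation*}

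Finally I would invoke the equality $\sqrt{\left\|(M^n)^{-1}\right\|_2} = \sqrt{\lambda_{\min}(M^n)}$ already established in the chain~\eqref{eq:Minvboundgood}, which delivers the stated value. Viewed this way, the proposition is precisely the sharp reading of~\eqref{eq:Minvboundgood} through the definition~\eqref{eq:2toM}: the Cauchy--Schwarz estimate there supplies the upper bound, and the extremal eigenvector supplies the matching lower bound. The main obstacle is therefore not the algebra but the sharpness claim -- verifying that the Cauchy--Schwarz step used in~\eqref{eq:Minvboundgood} is tight at the $\lambda_{\min}$ eigenvector, so that equality, rather than a one-sided bound, genuinely holds.
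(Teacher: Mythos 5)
Your argument is correct, and it is in fact more complete than what the paper does. The paper never proves this proposition as a standalone statement: it derives only an upper bound, via the Cauchy--Schwarz chain preceding~\eqref{eq:Minvboundgood} (namely $\|\delta\bfc\|_{M^n}^2 = \delta\bfc^T\delta\bfb \leq \|\delta\bfc\|_2\,\|\delta\bfb\|_2 \leq \|(M^n)^{-1}\|_2\,\|\delta\bfb\|_2^2$), and then simply ``interprets'' that bound as the proposition, leaving attainment implicit. Your route bypasses Cauchy--Schwarz entirely: the cancellation $\|(M^n)^{-1}\bfc\|_{M^n}^2 = \bfc^T (M^n)^{-1} M^n (M^n)^{-1}\bfc = \bfc^T (M^n)^{-1}\bfc$ turns the squared operator norm~\eqref{eq:2toM} into an exact Euclidean Rayleigh quotient for $(M^n)^{-1}$, so equality with $\lambda_{\max}\left((M^n)^{-1}\right) = \|(M^n)^{-1}\|_2$ follows from the spectral theorem, with the maximizer identified through Theorem~\ref{thm:masseig} as a normalization of $\bfpi(L^n)$. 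This buys you the sharpness the paper glosses over, and your closing worry about tightness of the Cauchy--Schwarz step is dissolved by your own computation, since no inequality other than the Rayleigh-quotient maximization ever appears (for the record, both inequalities in the paper's chain are indeed equalities when $\delta\bfb$ is the $\lambda_{\min}$ eigenvector, because then $\delta\bfc = \lambda_{\min}^{-1}\delta\bfb$ is parallel to $\delta\bfb$). Your method also proves the companion proposition $\|M^n\|_{M^n\rightarrow 2} = \sqrt{\lambda_{\max}(M^n)}$ in one line via the substitution $\bfd = \sqrt{M^n}\bfc$, whereas the paper devotes a longer square-root-matrix argument to it. One caution: the quantity your derivation actually produces is $\sqrt{\|(M^n)^{-1}\|_2} = \lambda_{\min}(M^n)^{-1/2}$, not $\sqrt{\lambda_{\min}(M^n)}$; the equality $\sqrt{\|(M^n)^{-1}\|_2} = \sqrt{\lambda_{\min}(M^n)}$ that you ``invoke'' from~\eqref{eq:Minvboundgood} is a typo in the paper (as the subsequent condition-number theorem $\kappa_{M^n\rightarrow 2}(M^n) = \sqrt{\kappa_2(M^n)}$ confirms, the value must be $\lambda_{\min}(M^n)^{-1/2}$), so you should state your final answer as $\sqrt{\|(M^n)^{-1}\|_2}$ rather than copying the paper's mislabeling.
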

We can, by the spectral decomposition, give a similar bound for $M^n$ in the $M\rightarrow 2$ norm:
\begin{proposition}
The norm of $M^n$ satisfies
\begin{equation}
\| (M^n) \|_{M^n\rightarrow 2} = \sqrt{\lambda_{\max}(M^n)}.
\end{equation}
\end{proposition}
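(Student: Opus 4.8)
The plan is to reduce this mixed-norm operator norm to a standard Euclidean operator norm via a symmetric change of variables built from the square root of $M^n$. By Theorem~\ref{thm:masseig} all eigenvalues $\lambda^n_i$ are positive, so $M^n$ is symmetric positive definite and admits a unique symmetric positive definite square root $(M^n)^{1/2} = Q^n (\Lambda^n)^{1/2} (Q^n)^T$, with inverse $(M^n)^{-1/2}$. First I would square the defining maximization and use symmetry of $M^n$ to write
\[
\| M^n \|_{M^n \rightarrow 2}^2 = \max_{\bfc^T M^n \bfc = 1} (M^n \bfc)^T (M^n \bfc).
\]

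Next I would substitute $\bfc = (M^n)^{-1/2} \bfz$. The constraint becomes $\bfz^T \bfz = 1$, since $(M^n)^{-1/2} M^n (M^n)^{-1/2} = I$, while $M^n \bfc = (M^n)^{1/2} \bfz$, so the objective becomes $\bfz^T M^n \bfz$. This turns the problem into
\[
\| M^n \|_{M^n \rightarrow 2}^2 = \max_{\| \bfz \|_2 = 1} \bfz^T M^n \bfz = \lambda_{\max}(M^n),
\]
the final equality being the usual Rayleigh-quotient characterization of the largest eigenvalue of a symmetric matrix. Taking square roots gives the claim. An equivalent route is to diagonalize directly via $\bfy = (Q^n)^T \bfc$, recasting the ratio as the generalized Rayleigh quotient $\sum_i (\lambda^n_i)^2 y_i^2 \big/ \sum_i \lambda^n_i y_i^2$, which is a convex combination of the $\lambda^n_i$ with nonnegative weights $\lambda^n_i y_i^2$ and is therefore bounded by $\lambda_{\max}(M^n)$.

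I expect no genuine obstacle here; the computation is routine once the square-root substitution is in place. The only points requiring a little care are verifying that $\bfc \mapsto (M^n)^{1/2}\bfc$ is a bijection carrying the unit $M^n$-sphere exactly onto the unit Euclidean sphere, so that no maximizers are lost or spuriously introduced (this follows from invertibility of $(M^n)^{1/2}$), and confirming that the maximum is attained rather than merely approached. Attainment follows by choosing $\bfc$ to be the eigenvector associated with $\lambda_{\max}(M^n) = \lambda^n_0$, namely $E^{0,n} \bfpi(L^0)$ by Theorem~\ref{thm:masseig}, so that the norm equals $\sqrt{\lambda_{\max}(M^n)}$ exactly.
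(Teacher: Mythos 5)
Your proof is correct and takes essentially the same approach as the paper: both arguments hinge on the SPD square root of $M^n$ and its spectral decomposition, with your substitution $\bfc = (M^n)^{-1/2}\bfz$ being a streamlined repackaging of the paper's identity $\| M^n \bfc \|_2 = \| \sqrt{M^n}\, \bfc \|_{M^n}$ followed by the Rayleigh-quotient computation of $\| \sqrt{M^n} \|_{M^n}$ (your ``equivalent route'' via $\bfy = (Q^n)^T \bfc$ is literally the paper's calculation). Your explicit verification of attainment at the eigenvector $E^{0,n}\bfpi(L^0)$ is a small point the paper leaves implicit.
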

\begin{proof}
Since $M^n$ is symmetric and positive-definite, it has a well-defined positive square root via the spectral decomposition.  For $\bfc \in \mathbb{R}^{n+1}$, we write
\begin{equation}
\begin{split}
\| M^n \bfc \|_{2} &= \left( M^n \bfc \right)^T \left( M^n \bfc \right)
= \bfc^T M^2 \bfc \\
&= \left( \sqrt{M^n} \bfc \right)^T M^n \left( \sqrt{M^n} \bfc \right)
= \| \sqrt{M^n} \bfc \|_{M^n} \\
&\leq \| \sqrt{M^n} \|_{M^n} \| \bfc \|_{M^n}.
\end{split}
\end{equation}
Now, we can characterize the weighted norm of the square root matrix via the spectral decomposition $M^n = Q^n \Lambda^n (Q^n)^T$.  

Note that for any $\bfc \in \mathbb{R}^{n+1}$, we have its $M^n$ norm as
\begin{equation}
\| \bfc \|_{M^n}^2
= \bfc^T M^n \bfc
= \left( (Q^n)^T \bfc \right) \Lambda \left( (Q^n)^T \bfc \right).
\end{equation}
Letting $\bfd = (Q^n)^T \bfc$,
\begin{equation}
\| \bfc \|^2_{M^n} = \| \bfd \|^2_{\Lambda^n} =
\sum_{i=0}^n \lambda^n_i |\bfd_i|^2.
\end{equation}

Now, we consider the $M^n$ norm of $\sqrt{M^n}$.  Again, for any $\bfc \in \mathbb{R}^{n+1}$, we have
\begin{equation}
\left\| \sqrt{M^n} \bfc \right\|^2_{M^n}
= \left( \sqrt{M^n} \bfc \right) M^n \sqrt{M^n} \bfc
= \bfc^T (M^n)^2 \bfc
= \bfc^T Q^n \Lambda^2 (Q^n)^T \bfc,
\end{equation}
so that with $\bfd = (Q^n)^T \bfc$, we have
\begin{equation}
\left\| \sqrt{M^n} \bfc \right\|^2_{M^n}
= \bfd^T (\Lambda^n)^2 \bfd
= \sum_{i=0}^n (\lambda^n_i)^2 |\bfd_i|^2
\leq \lambda^n_{\max} \sum_{i=0}^n \lambda^n_i |\bfd_i|^2
= \lambda^n_{\max} \| \bfd \|^2_{\Lambda^n}.
\end{equation}

Consequently,
\begin{equation}
\left\| \sqrt{M^n} \right\|_{M^n}
= \max_{\| \bfc \|_{M^n} = 1} \left\| \sqrt{M^n} \bfc \right\|_{M^n}
= \sqrt{\lambda^n_{\max}}.
\end{equation}
\end{proof}

\begin{theorem}
The condition number of solving $M^n \bfc = \bfb$ measuring $\bfc$ in the $M^n$-norm and $\bfb$ in the 2-norm satisfies
\begin{equation}
\kappa_{M^n\rightarrow 2}(M^n) =
\sqrt{\kappa_2(M^n)} 
= \sqrt{\frac{(2n+1)!}{(n+1)! n!}}
\end{equation}
\end{theorem}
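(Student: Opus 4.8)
The plan is to realize this mixed-norm condition number as the product of two operator norms, each of which is essentially already in hand. Solving $M^n\bfc = \bfb$ is the linear map $\bfb \mapsto \bfc = (M^n)^{-1}\bfb$; measuring the data $\bfb$ in the $2$-norm and the solution $\bfc$ in the $M^n$-norm, the relative condition number is
\begin{equation}
\kappa_{M^n\rightarrow 2}(M^n) = \sup_{\bfb,\,\delta\bfb\neq 0} \frac{\|\delta\bfc\|_{M^n}/\|\bfc\|_{M^n}}{\|\delta\bfb\|_2/\|\bfb\|_2},
\end{equation}
where $\delta\bfc = (M^n)^{-1}\delta\bfb$. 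Since the two suprema decouple, this factors as
\begin{equation}
\kappa_{M^n\rightarrow 2}(M^n) = \left(\sup_{\delta\bfb\neq 0}\frac{\|(M^n)^{-1}\delta\bfb\|_{M^n}}{\|\delta\bfb\|_2}\right)\left(\sup_{\bfc\neq 0}\frac{\|M^n\bfc\|_2}{\|\bfc\|_{M^n}}\right) = \|(M^n)^{-1}\|_{2\rightarrow M^n}\,\|M^n\|_{M^n\rightarrow 2}.
\end{equation}
So the task reduces to supplying the two factors.

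Both factors are controlled by the spectral decomposition $M^n = Q^n\Lambda^n(Q^n)^T$. The second factor is exactly the preceding proposition, $\|M^n\|_{M^n\rightarrow 2} = \sqrt{\lambda_{\max}(M^n)}$. For the first, I would compute directly: for any $\bfc$, the $M^n$-norm of $(M^n)^{-1}\bfc$ satisfies $\|(M^n)^{-1}\bfc\|_{M^n}^2 = \bfc^T(M^n)^{-1}M^n(M^n)^{-1}\bfc = \bfc^T(M^n)^{-1}\bfc$, using the symmetry of $(M^n)^{-1}$. Maximizing over $\|\bfc\|_2 = 1$ therefore returns the largest eigenvalue of $(M^n)^{-1}$, namely $1/\lambda_{\min}(M^n)$, so that $\|(M^n)^{-1}\|_{2\rightarrow M^n} = 1/\sqrt{\lambda_{\min}(M^n)}$. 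This is precisely the square-root amplification already recorded in~\eqref{eq:Minvboundgood}.

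Combining the two factors, the eigenvalues enter as a ratio rather than a product:
\begin{equation}
\kappa_{M^n\rightarrow 2}(M^n) = \frac{1}{\sqrt{\lambda_{\min}(M^n)}}\cdot\sqrt{\lambda_{\max}(M^n)} = \sqrt{\frac{\lambda_{\max}(M^n)}{\lambda_{\min}(M^n)}} = \sqrt{\kappa_2(M^n)}.
\end{equation}
The explicit formula then follows by substituting $\lambda_{\max}(M^n) = 1/(n+1)$ and $\lambda_{\min}(M^n) = (n!)^2/(2n+1)!$, which gives $\kappa_2(M^n) = (2n+1)!/\bigl((n+1)!\,n!\bigr)$ and hence the stated square root. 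The one place demanding care — and the only genuinely non-routine step — is setting up the factorization with the \emph{correct} norm on each side, so that the minimal eigenvalue enters through the inverse map and the maximal eigenvalue through the forward map. This is exactly what converts the naive product $\sqrt{\lambda_{\min}\lambda_{\max}}$ into the ratio $\sqrt{\lambda_{\max}/\lambda_{\min}}$; once the directions of the two operator norms are pinned down, everything that remains is substitution.
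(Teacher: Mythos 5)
Your proposal is correct and takes essentially the same route as the paper, which proves the two propositions $\| (M^n)^{-1} \|_{2\rightarrow M^n}$ and $\| M^n \|_{M^n\rightarrow 2} = \sqrt{\lambda_{\max}(M^n)}$ and then states the theorem as their immediate product; your explicit factorization of the relative condition number into $\|(M^n)^{-1}\|_{2\rightarrow M^n}\,\|M^n\|_{M^n\rightarrow 2}$ simply spells out the step the paper leaves implicit. One remark in your favor: your value $\|(M^n)^{-1}\|_{2\rightarrow M^n} = 1/\sqrt{\lambda_{\min}(M^n)}$ is the correct one, whereas the paper's proposition and~\eqref{eq:Minvboundgood} contain a typo writing $\sqrt{\lambda_{\min}(M^n)}$ where $\sqrt{\|(M^n)^{-1}\|_2} = \lambda_{\min}(M^n)^{-1/2}$ is meant, as is clear from the fact that only your reading yields the stated ratio $\sqrt{\lambda_{\max}/\lambda_{\min}} = \sqrt{\kappa_2(M^n)}$.
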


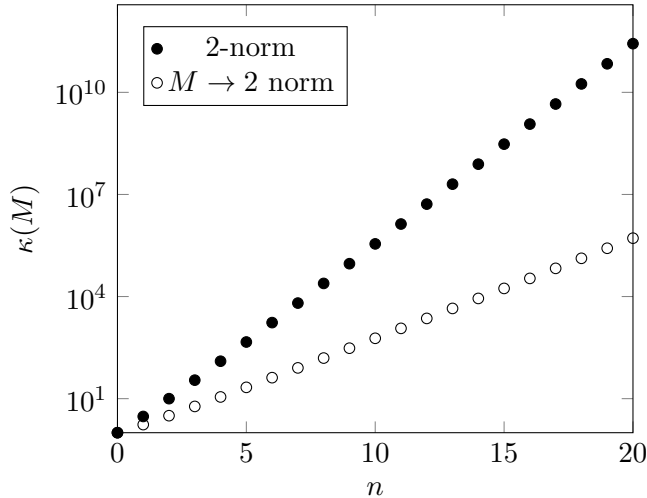
\begin{figure}
\begin{center}
\begin{tikzpicture}
\begin{semilogyaxis}[legend style={at={(0.05,0.95)},anchor=north west}, xlabel=$n$, ylabel=$\kappa(M)$, xmin=0, xmax=20, ymin=1]
\addplot [only marks, mark=*] table [x=n, y=kappa2, col sep=comma] {conds.dat};
\addplot [only marks, mark=o] table [x=n, y=kappam2, col sep=comma] {conds.dat};
\legend{2-norm,$M\rightarrow 2$ norm}
\end{semilogyaxis}
\end{tikzpicture}
\end{center}
\caption{Bernstein mass matrix conditioning for degrees 0 through 20 in the 2-norm and the $M\rightarrow 2$ norms.}
\label{fig:kappa}
\end{figure}

A plot of the condition numbers up to degree 20 in both norms is shown in Figure~\ref{fig:kappa}.  This discussion indicates that, when measuring the relevant $L^2$ rather than the Euclidean norm of the solution process, we can expect much better results than the alarming condition number in~\eqref{eq:kappa2} suggests.  It is important to note that nothing in this discussion, other than the eigenvalues of $M^n$, is particular to the univariate mass matrix.  Consequently, this discussion can inform the accuracy of the multivariate mass inversion process in~\cite{kirby2017fast} as well as the preconditioners for global mass matrices given in~\cite{AINSWORTH2019766}.

\section{Numerical results}
Now, we consider the accuracy of the methods described above on a range of problems.  We consider the best $L^2$ approximation of two smooth functions, $f(x) = 1/(1+396(x-0.5)^2)$ and $f(x) = 0.01 + x/(x^2+1)$ -- see Figure~\ref{fig:fplots} for plots of these.  Both functions are smooth.  However, the first function has large derivatives and so requires high order of approximation to obtain small error.  The second is not symmetric about $x=0.5$ but is otherwise relatively easy to approximate with a polynomial.

\begin{figure}
\centering
\begin{subfigure}{0.48\textwidth}
\centering
\begin{tikzpicture}[scale=0.85]
\begin{axis}[xlabel={$x$}, ylabel=$y$, samples=1000, thick, xmin=0, xmax=1, ymin=0, ymax=1, no marks]
\addplot[black]{1/(1+396*(x-0.5)^2)};
\end{axis}
\end{tikzpicture}
\caption{$1/(1+396(x-0.5)^2)$}
\label{fhard}
\end{subfigure}
\begin{subfigure}{0.48\textwidth}
\centering
\begin{tikzpicture}[scale=0.85]
\begin{axis}[xlabel={$x$}, ylabel={$y$}, samples=1000, thick, xmin=0, xmax=1, ymin=0, ymax=1.0, no marks]
\addplot[black]{1/100+x/(x^2+1)};
\end{axis}
\end{tikzpicture}
\caption{$0.01 + x/(1+x^2)$}
\label{fnonsymm}
\end{subfigure}
\caption{Plots of two functions being approximating with Bernstein polynomials.  Figure~\ref{fhard} is smooth, but has large derivatives and so needs high polynomial order to make the error small.  The function in Figure~\ref{fnonsymm} is much simpler to approximate, but illustrates that the methods work on functions not symmetric about the interval midpoint.}
\label{fig:fplots}
\end{figure}
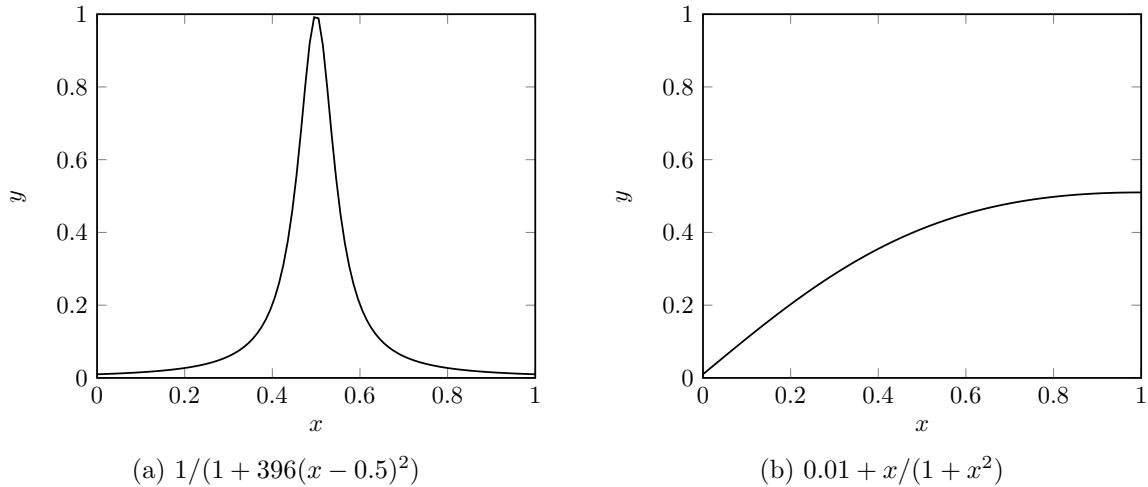

Solving~\eqref{eq:theeq} accurately, the best polynomial approximation of degree $n$ gives an $L^2$ error decreasing exponentially as a function of $n$.  To demonstrate this, Figures~\ref{subfig:ratfp} and~\ref{subfig:nonsymfp} show the $L^2$ error between the two functions and the $L^2$ projection computed with each of the four methods described above.  However, at various degrees, each of the methods deviate from yielding exponential convergence as roundoff error accumulates.  We note that the DFT-based method seems to be the worst, followed by multiplication by the exactly computed inverse.

We also compute the best approximation using the Legendre polynomials (which only involves numerical integration) and compute the $L^2$ difference between that and the polynomial produced by each solution technique.  Equivalently, this is the relative $M^n$-norm error between the exact and computed solution to~\eqref{eq:theeq}.  These plots appear in Figures~\ref{subfig:ratPifp} and~\ref{subfig:nonsymPifp}, futher demonstrating the instability of the DFT-based approach. 

Differences between the three more stable methods begin to appear when we consider the Euclidean norm of the error (Figures~\ref{subfig:raterr} and~\ref{subfig:nonsymerr}) and residual (Figures~\ref{subfig:ratres} and~\ref{subfig:nonsymres}) for each of our solution methods.  The DFT-based approach again performs much worse than the other methods, although the exact inverse gives 2-norm error comparable to the spectral and Cholesky approaches.  The residual plots show that the latter two methods give very small residual errors, not much above machine precision.  This strong backward stability of the Cholesky decomposition is a known property~\cite{golub2012matrix}, and we suspect (but have not proven) that it holds for our spectral decomposition as well.

To illustrate that these properties of the solution algorithm do not seem to depend on approximating a smooth solution, we also chose random solution vectors, computed the right-hand side by matrix multiplication, and then applied our four solution algorithms to attempt to recover the solution.  In Figure~\ref{fig:randominfo}, we see exactly the same behavior -- the DFT-based method behaving very badly, the Cholesky and spectral methods seemingly backward stable, and the exact matrix inverse somewhere in between.

These numerical results highlight several points.  First, although finite-dimensional norms are equivalent, the bounding constants can dramatically vary as a function of the matrix size.  Consequently, we can see comparable Euclidean norm errors but very different $M^n$-norm errors for, say the matrix inverse and Cholesky methods.  Second, the ill conditioning of our method is real, although we in fact see that the $M^n$ norm errors (Figures~\ref{subfig:ratPifp} and~\ref{subfig:nonsymPifp}) in our solution process for the (at least empirically) backward-stable methods are in fact quite a bit lower than the 2-norm errors as predicted by the conditioning analysis in Section~\ref{sec:cond}.  It seems that the empirical performance of the spectral method and Cholesky factorization are the best, although they have slightly different associated costs.  The spectral decomposition can be computed more quickly ($\mathcal{O}(n^2)$ versus $\mathcal{O}(n^3)$ for Cholesky), but the per-solve costs is higher -- two dense matrix multiplications rather than two triangular solves.

\begin{figure}
\centering
\vskip\baselineskip
\begin{subfigure}{0.475\linewidth}
\centering
\begin{tikzpicture}[scale=0.85]
\begin{semilogyaxis}[legend style={at={(0.05,0.05)}, anchor=south west}, xlabel=$n$, ylabel=$\| f-p \| / \| f \|$, xmin=0, xmax=20, ymin=1e-18]
\addplot [mark=square] table [x=n, y=directfp, col sep=comma] {approx_info1.dat};
\addplot [mark=triangle] table [x=n, y=DFTfp, col sep=comma] {approx_info1.dat};
\addplot [mark=diamond] table [x=n, y=Eigfp, col sep=comma] {approx_info1.dat};
\addplot[mark=o] table [x=n, y=chofp, col sep=comma]{approx_info1.dat};
\legend{$M^{-1}$, DFT, $Q\Lambda^{-1}Q^T$, $LL^T$}
\end{semilogyaxis}
\end{tikzpicture}
\caption{$L^2$ error between $f$ and $p$.}
\label{subfig:ratfp}
\end{subfigure}
\hfill
\begin{subfigure}{0.475\linewidth}
\centering
\begin{tikzpicture}[scale=0.85]
\begin{semilogyaxis}[legend style={at={(0.05,0.95)}, anchor=north west}, xlabel=$n$, ylabel=$\| \Pi f-p \| / \| f \|$, xmin=0, xmax=20, ymin=1e-18]
\addplot [mark=square] table [x=n, y=directPifp, col sep=comma] {approx_info1.dat};
\addplot [mark=triangle] table [x=n, y=DFTPifp, col sep=comma] {approx_info1.dat};
\addplot [mark=diamond] table [x=n, y=EigPifp, col sep=comma] {approx_info1.dat};
\addplot[mark=o] table [x=n, y=choPifp, col sep=comma]{approx_info1.dat};
\legend{$M^{-1}$, DFT, $Q\Lambda^{-1}Q^T$, $LL^T$}
\end{semilogyaxis}
\end{tikzpicture}
\caption{$L^2$ error between $\Pi f$ and $p$.}
\label{subfig:ratPifp}
\end{subfigure}
\vskip\baselineskip
\begin{subfigure}{0.475\linewidth}
\centering
\begin{tikzpicture}[scale=0.85]
\begin{semilogyaxis}[legend style={at={(0.05,0.95)}, anchor=north west}, xlabel=$n$, ylabel=$\| \bfx-\widehat{\bfx} \| / \| \bfx\|$, xmin=0, xmax=20, ymin=1e-18]
\addplot [mark=square] table [x=n, y=directerr, col sep=comma] {approx_info1.dat};
\addplot [mark=triangle] table [x=n, y=DFTerr, col sep=comma] {approx_info1.dat};
\addplot [mark=diamond] table [x=n, y=Eigerr, col sep=comma] {approx_info1.dat};
\addplot[mark=o] table [x=n, y=choerr, col sep=comma]{approx_info1.dat};
\legend{$M^{-1}$, DFT, $Q\Lambda^{-1}Q^T$, $LL^T$}
\end{semilogyaxis}
\end{tikzpicture}
\caption{Error in the 2-norm.}
\label{subfig:raterr}
\end{subfigure}
\hfill
\begin{subfigure}{0.475\linewidth}
\centering
\begin{tikzpicture}[scale=0.85]
\begin{semilogyaxis}[legend style={at={(0.05,0.95)}, anchor=north west}, xlabel=$n$, ylabel=$\| M\widehat{\bfx}-\bfb \| / \| \bfb \|$, xmin=0, xmax=20, ymin=1e-18]
\addplot [mark=square] table [x=n, y=directres, col sep=comma] {approx_info1.dat};
\addplot [mark=triangle] table [x=n, y=DFTres, col sep=comma] {approx_info1.dat};
\addplot [mark=diamond] table [x=n, y=Eigres, col sep=comma] {approx_info1.dat};
\addplot[mark=o] table [x=n, y=chores, col sep=comma]{approx_info1.dat};
\legend{$M^{-1}$, DFT, $Q\Lambda^{-1}Q^T$, $LL^T$}
\end{semilogyaxis}
\end{tikzpicture}
\caption{Residual in the 2-norm.}
\label{subfig:ratres}
\end{subfigure}
\caption{Relative error/residual in using the methods described in Section~\ref{sec:apply} to compute the degree $n$ approximation of $f(x)=\frac{1}{1+396(x-0.5)^2}$. $M^{-1}$ refers to the exact inverse, DFT refers to the factored inverse, $Q\Lambda^{-1}Q^T$ refers to the spectral decomposition, and $LL^T$ refers to the Cholesky factorization. We use $p$ to denote the computed approximation, $\Pi f$ to denote the best approximation, and $\widehat{\bfx}$ and $\bfx$ to denote their respective Bernstein coefficients. The vector $\bfb$ is given by $\bfb_i = \left( f(x), B^n_i(x)\right)_{L^2}$.}
\label{fig:ratinfo}
\end{figure}
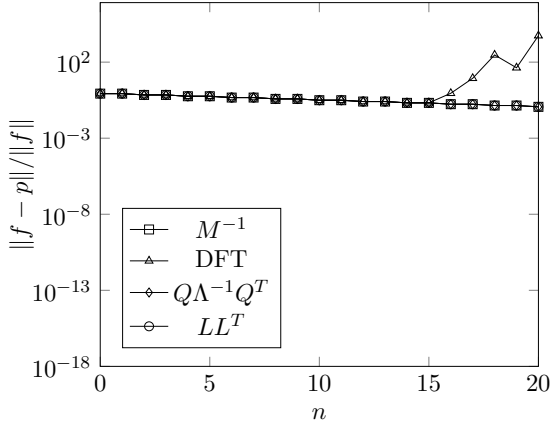
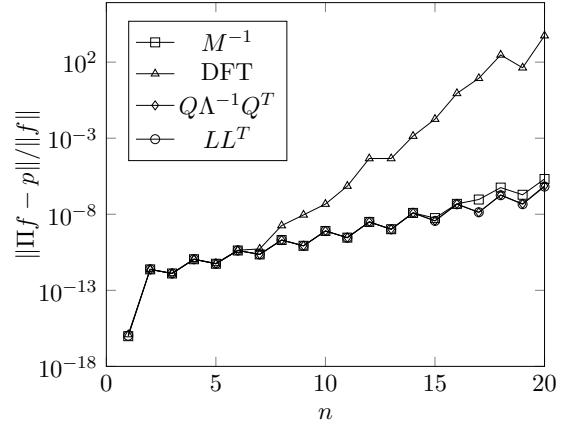
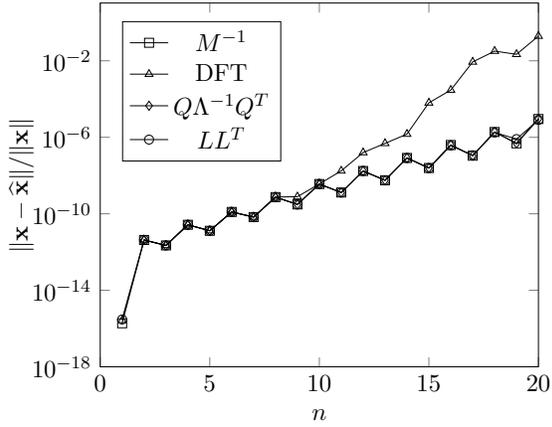
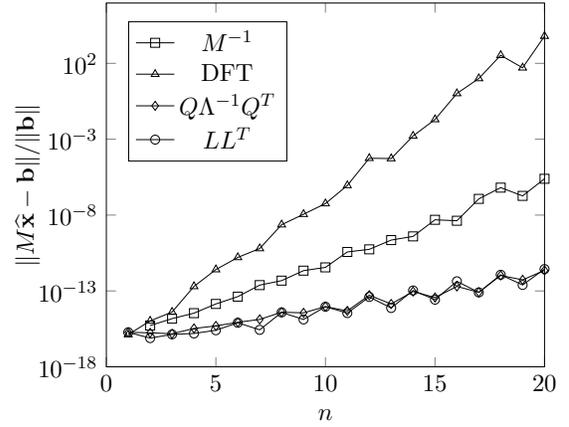

\begin{figure}
\centering
\vskip\baselineskip
\begin{subfigure}{0.475\linewidth}
\centering
\begin{tikzpicture}[scale=0.85]
\begin{semilogyaxis}[legend style={at={(0.05,0.05)}, anchor=south west}, xlabel=$n$, ylabel=$\| f-p \| / \| f \|$, xmin=0, xmax=20, ymin=1e-18]
\addplot [mark=square] table [x=n, y=directfp, col sep=comma] {approx_info2.dat};
\addplot [mark=triangle] table [x=n, y=DFTfp, col sep=comma] {approx_info2.dat};
\addplot [mark=diamond] table [x=n, y=Eigfp, col sep=comma] {approx_info2.dat};
\addplot[mark=o] table [x=n, y=chofp, col sep=comma]{approx_info2.dat};
\legend{$M^{-1}$, DFT, $Q\Lambda^{-1}Q^T$, $LL^T$}
\end{semilogyaxis}
\end{tikzpicture}
\caption{$L^2$ error between $f$ and $p$.}
\label{subfig:nonsymfp}
\end{subfigure}
\hfill
\begin{subfigure}{0.475\linewidth}
\centering
\begin{tikzpicture}[scale=0.85]
\begin{semilogyaxis}[legend style={at={(0.05,0.95)}, anchor=north west}, xlabel=$n$, ylabel=$\| \Pi f-p \| / \| f \|$, xmin=0, xmax=20, ymin=1e-18]
\addplot [mark=square] table [x=n, y=directPifp, col sep=comma] {approx_info2.dat};
\addplot [mark=triangle] table [x=n, y=DFTPifp, col sep=comma] {approx_info2.dat};
\addplot [mark=diamond] table [x=n, y=EigPifp, col sep=comma] {approx_info2.dat};
\addplot[mark=o] table [x=n, y=choPifp, col sep=comma]{approx_info2.dat};
\legend{$M^{-1}$, DFT, $Q\Lambda^{-1}Q^T$, $LL^T$}
\end{semilogyaxis}
\end{tikzpicture}
\caption{$L^2$ error between $\Pi f$ and $p$.}
\label{subfig:nonsymPifp}
\end{subfigure}
\vskip\baselineskip
\begin{subfigure}{0.475\linewidth}
\centering
\begin{tikzpicture}[scale=0.85]
\begin{semilogyaxis}[legend style={at={(0.05,0.95)}, anchor=north west}, xlabel=$n$, ylabel=$\| \bfx-\widehat{\bfx} \| / \| \bfx\|$, xmin=0, xmax=20, ymin=1e-18]
\addplot [mark=square] table [x=n, y=directerr, col sep=comma] {approx_info2.dat};
\addplot [mark=triangle] table [x=n, y=DFTerr, col sep=comma] {approx_info2.dat};
\addplot [mark=diamond] table [x=n, y=Eigerr, col sep=comma] {approx_info2.dat};
\addplot[mark=o] table [x=n, y=choerr, col sep=comma]{approx_info2.dat};
\legend{$M^{-1}$, DFT, $Q\Lambda^{-1}Q^T$, $LL^T$}
\end{semilogyaxis}
\end{tikzpicture}
\caption{Error in the 2-norm.}
\label{subfig:nonsymerr}
\end{subfigure}
\hfill
\begin{subfigure}{0.475\linewidth}
\centering
\begin{tikzpicture}[scale=0.85]
\begin{semilogyaxis}[legend style={at={(0.05,0.95)}, anchor=north west}, xlabel=$n$, ylabel=$\| M\widehat{\bfx}-\bfb \| / \| \bfb \|$, xmin=0, xmax=20, ymin=1e-18]
\addplot [mark=square] table [x=n, y=directres, col sep=comma] {approx_info2.dat};
\addplot [mark=triangle] table [x=n, y=DFTres, col sep=comma] {approx_info2.dat};
\addplot [mark=diamond] table [x=n, y=Eigres, col sep=comma] {approx_info2.dat};
\addplot[mark=o] table [x=n, y=chores, col sep=comma]{approx_info2.dat};
\legend{$M^{-1}$, DFT, $Q\Lambda^{-1}Q^T$, $LL^T$}
\end{semilogyaxis}
\end{tikzpicture}
\caption{Residual in the 2-norm.}
\label{subfig:nonsymres}
\end{subfigure}
\caption{Relative error/residual in using the methods described in Section~\ref{sec:apply} to compute the degree $n$ approximation of $f(x)=0.01+\frac{x}{x^2+1}$. $M^{-1}$ refers to the exact inverse, DFT refers to the factored inverse, $Q\Lambda^{-1}Q^T$ refers to the spectral decomposition, and $LL^T$ refers to the Cholesky factorization. We use $p$ to denote the computed approximation, $\Pi f$ to denote the best approximation, and $\widehat{\bfx}$ and $\bfx$ to denote their respective Bernstein coefficients. The vector $\bfb$ is given by $\bfb_i = \left( f(x), B^n_i(x)\right)_{L^2}$.}
\label{fig:nonsyminfo}
\end{figure}

\begin{figure}
\centering
\begin{subfigure}{0.475\linewidth}
\centering
\begin{tikzpicture}[scale=0.85]
\begin{semilogyaxis}[legend style={at={(0.05,0.95)}, anchor=north west}, xlabel=$n$, ylabel=$\| \bfx-\widehat{\bfx} \|_2 / \| \bfx \|_2$, xmin=0, xmax=20, ymin=1e-18]
\addplot [mark=square] table [x=n, y=directL2err, col sep=comma] {random_info.dat};
\addplot [mark=triangle] table [x=n, y=DFTL2err, col sep=comma] {random_info.dat};
\addplot [mark=diamond] table [x=n, y=EigL2err, col sep=comma] {random_info.dat};
\addplot[mark=o] table [x=n, y=choL2err, col sep=comma]{random_info.dat};
\legend{$M^{-1}$, DFT, $Q\Lambda^{-1}Q^T$, $LL^T$}
\end{semilogyaxis}
\end{tikzpicture}
\caption{Error in the 2-norm.}
\label{subfig:2err}
\end{subfigure}
\hfill
\begin{subfigure}{0.475\linewidth}
\centering
\begin{tikzpicture}[scale=0.85]
\begin{semilogyaxis}[legend style={at={(0.05,0.95)}, anchor=north west}, xlabel=$n$, ylabel=$\| \bfx-\widehat{\bfx} \|_M / \| \bfx \|_M$, xmin=0, xmax=20, ymin=1e-18]
\addplot [mark=square] table [x=n, y=directMerr, col sep=comma] {random_info.dat};
\addplot [mark=triangle] table [x=n, y=DFTMerr, col sep=comma] {random_info.dat};
\addplot [mark=diamond] table [x=n, y=EigMerr, col sep=comma] {random_info.dat};
\addplot[mark=o] table [x=n, y=choMerr, col sep=comma]{random_info.dat};
\legend{$M^{-1}$, DFT, $Q\Lambda^{-1}Q^T$, $LL^T$}
\end{semilogyaxis}
\end{tikzpicture}
\caption{Error in the $M$ norm.}
\label{subfig:Merr}
\end{subfigure}
\vskip\baselineskip
\begin{subfigure}{\linewidth}
\centering
\begin{tikzpicture}[scale=0.85]
\begin{semilogyaxis}[legend style={at={(0.05,0.95)}, anchor=north west}, xlabel=$n$, ylabel=$\| M\widehat{\bfx}-\bfb \|_2$, xmin=0, xmax=20, ymin=1e-18]
\addplot [mark=square] table [x=n, y=directres, col sep=comma] {random_info.dat};
\addplot [mark=triangle] table [x=n, y=DFTres, col sep=comma] {random_info.dat};
\addplot [mark=diamond] table [x=n, y=Eigres, col sep=comma] {random_info.dat};
\addplot[mark=o] table [x=n, y=chores, col sep=comma]{random_info.dat};
\legend{$M^{-1}$, DFT, $Q\Lambda^{-1}Q^T$, $LL^T$}
\end{semilogyaxis}
\end{tikzpicture}
\caption{Residual in the 2-norm.}
\label{subfig:2res}
\end{subfigure}
\caption{Error/residual in using the methods described in Section~\ref{sec:apply} to solve $M\bfx=\bfb$, where $\bfb$ is a random vector in $[-0.5,0.5]^{n+1}$. $M^{-1}$ refers to the exact inverse, DFT refers to the factored inverse, $Q\Lambda^{-1}Q^T$ refers to the spectral decomposition, and $LL^T$ refers to the Cholesky factorization. We use $\widehat{\bfx}$ to denote the computed solution.}
\label{fig:randominfo}
\end{figure}
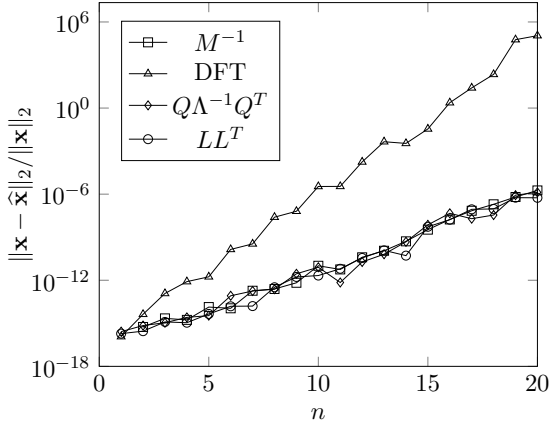
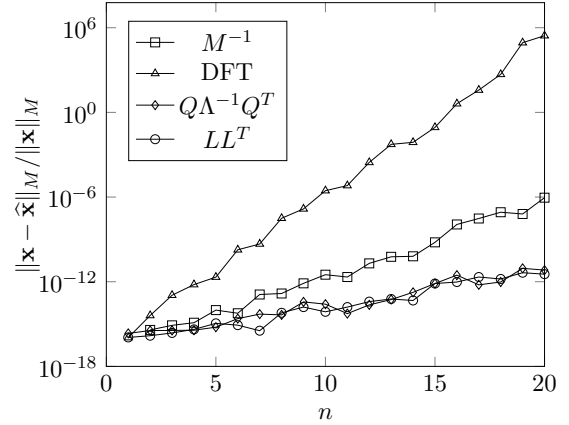
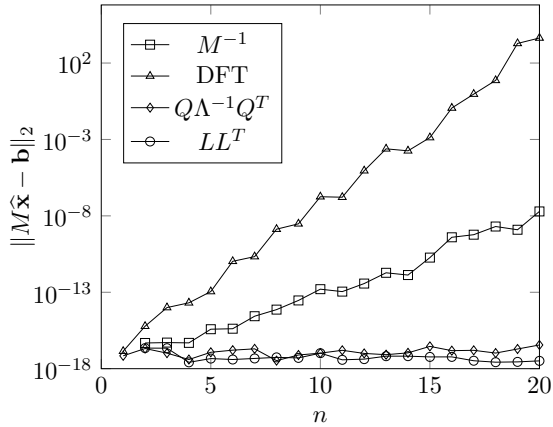

\section{Conclusions}
We have studied several algorithms for the inversion of the univariate Bernstein mass matrix.  Our fast algorithm for inversion based on the spectral decomposition seems very stable in practice and has accuracy comparable to the Cholesky decomposition, while a superfast algorithm based on the discrete Fourier transform is unfortunately unstable.  Moreover, we have given a new perspective on the conditioning of matrices for polynomial projection indicating that the problems are better-conditioned with respect to the $L^2$ norm of the output than the Euclidean norm.  In the future, we hope to expand this perspective to other polynomial problems and continue the development of fast and accurate methods for problems involving Bernstein polynomials.
\bibliographystyle{plain}
\bibliography{references.bib}

\end{document}